\newtheorem{theorem}{Theorem}[section]
\newtheorem{definition}[theorem]{Definition}
\newtheorem{proposition}[theorem]{Proposition}
\newtheorem{lemma}[theorem]{Lemma}
\newtheorem{claim}[theorem]{Claim}
\newtheorem{conjecture}[theorem]{Conjecture}
\newcommand{\dRainbow}[1]{d_{R}(#1)}
\begin{document}

\title{Rainbow matchings and rainbow connectedness}

\author{\large{Alexey Pokrovskiy} 
\\
\\ Methods for Discrete Structures,
\\ {Freie Universit\"at,} 
\\ Berlin, Germany.
\\ {Email: \texttt{alja123@gmail.com}}
\\ 
\\ \small Keywords: Rainbows.}

\maketitle

\begin{abstract}
 Aharoni and Berger conjectured that every bipartite graph which is the union of $n$ matchings of size $n+1$ contains a rainbow matching of size $n$. This conjecture is a generalization of several old conjectures of Ryser, Brualdi, and Stein about transversals in Latin squares. There have been many recent partial results about the Aharoni-Berger Conjecture. In the case when the matchings are much larger than $n+1$, the best bound is currently due to Clemens and Ehrenm\"uller who proved the conjecture when the matchings are of size at least $3n/2+o(n)$. When the matchings are all edge-disjoint and perfect, then the best result follows from a theorem of H\"aggkvist and Johansson which implies the conjecture when the matchings have size at least $n+o(n)$.
 
In this paper we show that the conjecture is true when the matchings have size $n+o(n)$ and are all edge-disjoint (but not necessarily perfect). We also give an alternative argument to prove the conjecture when the matchings have size at least $\phi n+o(n)$ where $\phi\approx 1.618$ is the Golden Ratio.

Our proofs involve studying connectedness in coloured, directed graphs. The notion of connectedness that we introduce is new, and perhaps of independent interest.
\end{abstract}

\section{Introduction}
A Latin square of order $n$ is an $n\times n$ array filled with $n$ different symbols, where no symbol appears in the same row or column more than once. Latin squares arise in different branches of mathematics such as algebra (where Latin squares are exactly the multiplication tables of quasigroups) and  experimental design (where they give rise to designs called Latin square designs). They also occur in  recreational mathematics---for example completed Sudoku puzzles are Latin squares. 

In this paper we will look for \emph{transversals} in Latin squares---a transversal in a Latin square of order $n$ is a set of $n$ entries such that no two entries are in the same row, same column, or have the same symbol. One reason transversals in Latin squares are interesting is that a Latin square has an orthogonal mate if, and only if, it has a decomposition into disjoint transversals. See \cite{WanlessSurvey} for a survey about transversals in Latin squares.
It is easy to see that not every Latin square has a transversal (for example the unique $2\times 2$ Latin square has no transversal), however perhaps every Latin square contains a large \emph{partial transversal} (a partial transversal of size $m$ is a  set of $m$ entries such that no two entries are in the same row, same column, or have the same symbol)? 

There are several closely related, old, and difficult conjectures which guarantee large partial transversals in Latin squares. The first of these is a conjecture of Ryser that every Latin square of odd order contains a transversal \cite{Ryser}.  Brualdi  conjectured that every Latin square contains a partial transversal of size $n-1$ (see \cite{Brualdi}). Stein independently made the stronger conjecture that every $n\times n$ array filled with $n$ symbols, each appearing exactly $n$ times contains a partial transversal of size $n-1$ \cite{Stein}. Because of the similarity of the above two conjectures, the following is often referred to as ``the Brualdi-Stein Conjecture''.
\begin{conjecture}[Brualdi and Stein, \cite{Brualdi, Stein}]\label{BrualdiStein}
Every Latin square contains a partial transversal of size $n-1$.
\end{conjecture}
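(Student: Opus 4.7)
The statement (Conjecture 1.1, Brualdi--Stein) is a famous open problem, so any honest proof proposal must concede that none of the standard routes has yet cracked the tight bound $n-1$. I will outline what seems the most natural angle consistent with the paper's framework: greedy plus rainbow augmentation, translated through coloured digraphs.

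First I would move to the rainbow-matching language of the paper. A Latin square of order $n$ is exactly a proper edge-colouring of $K_{n,n}$ by $n$ colours in which each colour class is a perfect matching, and a partial transversal of size $m$ is a rainbow matching of size $m$. So Conjecture 1.1 is: the union of $n$ edge-disjoint perfect matchings on a common $n+n$ vertex set contains a rainbow matching of size $n-1$. This places it squarely in the regime of the Aharoni--Berger conjecture discussed in the abstract, but in the hardest setting where the matching sizes equal $n$ rather than $n+\varepsilon$.

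Second, I would take a rainbow matching $M$ of maximum size $m$ and assume for contradiction that $m \le n-2$. The natural tool is an augmenting trail: for each of the $\ge 2$ unused colours $c$, build a directed auxiliary graph whose arcs encode ``swap an $M$-edge of colour $c'$ for an edge of colour $c$ at a currently unsaturated vertex''. A suitable alternating structure (an alternating path to an uncovered vertex, a short alternating cycle through an unused colour, or a ``flower'' branching off $M$) can then be spliced into $M$ to produce a rainbow matching of size $m+1$, contradicting maximality. This is precisely the kind of coloured, directed connectedness that the paper introduces, so I would try to feed the present setting into the paper's connectedness machinery and deduce the existence of the required augmenting trail from quantitative expansion of these auxiliary digraphs.

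Third, the obstacle. For the weaker target $n-o(n)$, the two unused colours and the $\ge 2$ unsaturated vertices on each side give enough slack that nibble-type or connectedness arguments (as in the paper's main results) routinely find augmentations. For the tight bound $n-1$, one has to rule out a very thin family of obstructing configurations where every candidate augmenting trail is blocked by the global Latin-square structure; this is exactly why the absolute bound has been stuck near $n-O(\log^2 n / \log\log n)$ since Hatami--Shor. I expect a genuine proof must (i) essentially use that each colour class is a \emph{perfect} matching, not merely a large one, since this is what distinguishes the problem from the $n+o(n)$ regime handled in this paper, and (ii) extract a global parity or double-counting invariant that forces either an augmenting trail or a direct contradiction with the Latin-square axioms. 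Locating such an invariant, rather than the connectedness step itself, is where I expect the proof to stand or fall.
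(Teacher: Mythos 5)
You have correctly identified that Conjecture~\ref{BrualdiStein} is an open problem: the paper states it as a conjecture, attributes it to Brualdi and Stein, and offers no proof. There is therefore no ``paper proof'' to compare against, and a reviewer should not expect a complete argument here. Your honesty on this point is exactly right, and your translation of the statement into the rainbow-matching language (Latin square $\leftrightarrow$ proper $n$-edge-colouring of $K_{n,n}$, partial transversal $\leftrightarrow$ rainbow matching) is the same dictionary the paper uses in the introduction, so the framing is sound.

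Two small calibration points on the sketch itself. First, your claim that ``each of the $\ge 2$ unused colours'' and ``$\ge 2$ unsaturated vertices on each side'' give slack is only available once you assume $m\le n-2$; this is fine for a contradiction argument, but be aware that the hard case is precisely $m=n-2$ with exactly two free vertices per side, where the auxiliary digraph can be very sparse and the connectedness lemmas of Section~\ref{SectionConnectedness} (which all need minimum out-degree of order $\epsilon n$ and $n$ large) do not apply. Second, the paper's own results (Theorem~\ref{MainTheorem} and Theorem~\ref{GoldenRatioTheorem}) deliberately work in the regime where colour classes have size $(1+\epsilon)n$ or $\phi n$, and the Woolbright/Brouwer--de Vries--Wieringa and Hatami--Shor bounds you cite are indeed the state of the art at size exactly $n$; so your assessment that a genuinely new global invariant, exploiting that each colour class is a \emph{perfect} matching, would be needed is consistent with the paper's discussion. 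In short: there is no gap in your reasoning to point to, because you did not claim a proof; the one thing to make explicit is that the quantitative connectedness machinery of this paper degrades exactly when the slack $\epsilon n$ vanishes, which is why it cannot, as written, be pushed to the tight bound $n-1$.
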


There have been many partial results about this conjecture. 
It is  known that every Latin square has a  partial transversal of size $n-o(n)$---Woolbright \cite{Woolbright} and independently Brower, de Vries and Wieringa \cite{Brower} proved that ever Latin square contains a partial transversal of size $n-\sqrt n$. This has been improved by Hatami and Schor \cite{HatamiSchor} to $n-O(\log^2 n)$. A  remarkable result of H\"aggkvist and Johansson shows that if we consider $(1-\epsilon)n\times n$ Latin rectangles rather than Latin squares, then it is possible to decompose all the entries into disjoint transversals (for $m\leq n$ a $m\times n$ Latin rectangle is an $m\times n$ array of $n$ symbols where no symbol appears in the same row or column more than once. A transversal in a Latin rectangle is a set of $m$ entries no two of which are in the same row, column, or have the same symbol).
\begin{theorem}[H\"aggkvist and Johansson, \cite{HaggkvistJohansson}]\label{HaggvistTheorem}
For every $\epsilon$, there is an $m_0=m_0(\epsilon)$ such that the following holds.
For every $n\geq (1+\epsilon)m\geq m_0$, every $m\times n$ Latin rectangle can be decomposed into disjoint transversals.
\end{theorem}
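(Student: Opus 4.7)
The plan is to recast the problem in hypergraph language. Form the $3$-partite $3$-uniform linear hypergraph $H$ on vertex set $R \cup C \cup S$, where $R$ is the set of $m$ rows, $C$ is the set of $n$ columns, and $S$ is the set of $n$ symbols; the hyperedges are the triples $\{r,c,s\}$ such that entry $(r,c)$ of the Latin rectangle equals $s$. There are $mn$ hyperedges; every $r \in R$ has degree $n$, and every $c \in C$, $s \in S$ has degree $m$. A transversal of the rectangle is precisely a matching in $H$ of size $m$ (equivalently, one saturating $R$), so a decomposition into transversals is a proper edge coloring of $H$ using exactly $n$ colors in which every color class saturates $R$.

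The first step is to find a near-optimal proper edge coloring using the semi-random (``nibble'') method. Because $H$ is a $3$-uniform linear hypergraph with maximum degree $n\to\infty$ and codegree $1$, the Pippenger--Spencer theorem on the chromatic index of uniform hypergraphs applies and yields a proper edge coloring of $H$ with $n(1+o(1))$ colors. Since the total number of edges is $mn$, the average color class size is $m(1-o(1))$; moreover, each row vertex has degree exactly $n$, so all but $o(n)$ of the color classes already form transversals saturating $R$.

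The second step is to convert this approximate decomposition into an exact one, using the slack $n \geq (1+\epsilon)m$. The idea is to set aside, before running the nibble, a small random ``absorbing'' set of entries (so that the absorber has spare columns and symbols not touched by the nibble), run the nibble on the complement, and then use augmenting swaps along the absorber to both saturate the few color classes that miss some rows and to merge or dissolve the $o(n)$ excess color classes, leaving exactly $n$ transversals covering every entry. The $\epsilon n$ excess of columns and symbols over rows is what provides the capacity for these swaps to succeed.

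The hard part is step two. The nibble easily delivers ``almost everything'', but absorbing the $o(mn)$ defective entries into an exact decomposition requires an absorber that is simultaneously rich enough to fix any likely pattern of defect (missing rows in some classes, excess classes to merge) and small enough not to destabilise the concentration estimates that drive the nibble. Showing such an absorber exists, and that it can be used to reroute the nibble's leftover without creating new conflicts in colour, row, column, or symbol, is where essentially all the technical work lies; the role of the hypothesis $n\geq(1+\epsilon)m$, rather than just $n\geq m$, is to give the absorber enough room to operate.
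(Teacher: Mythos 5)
First, note that the paper does not actually prove Theorem~\ref{HaggvistTheorem}: it cites it from H\"aggkvist and Johansson and merely remarks that their proof is a probabilistic argument via a ``random greedy process'' that builds the transversals directly. Your plan (nibble to get an approximate proper edge colouring, then absorb the defect) is a different route, and it is a sensible high-level outline, but as written it is not a proof: you yourself say that step two is ``where essentially all the technical work lies,'' and you supply no concrete absorber or swapping scheme. That is the gap, and it is not a small one.

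More specifically, the Pippenger--Spencer step is fine --- $H$ is $3$-uniform, linear, with $\Delta(H)=n$, so it has chromatic index $(1+o(1))n$. But what this delivers is a partition into $(1+o(1))n$ matchings whose sizes and defect pattern you do not control: the aggregate defect is $o(mn)$, yet it could be concentrated on a few rows, columns, or symbols, and many of the surplus classes could be tiny. To finish you must (a) merge the $o(n)$ excess classes away and (b) repair the classes that miss rows, without creating row/column/symbol collisions and while using only entries you deliberately withheld from the nibble. The claim that ``the $\epsilon n$ excess of columns and symbols over rows provides the capacity'' is plausible but is exactly the thing that has to be proved: one needs to design a randomly reserved sub-rectangle, show the nibble's defect lands on it with the right distribution, and give explicit augmenting structures (in the Latin-square setting, something like rainbow alternating paths or an iterative completion of a partial Latin rectangle) whose existence is guaranteed by the slack. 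None of that is here. So the proposal should be read as a reasonable roadmap for a modern ``semi-random plus absorption'' proof that differs from H\"aggkvist and Johansson's original random greedy argument, but the crux --- the absorption lemma --- is missing, so the theorem is not established by your sketch.
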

This theorem is proved by a probabilistic argument, using a ``random greedy process'' to construct the transversals. The above theorem gives yet another proof that every sufficiently large $n\times n$ Latin square has a partial transversal of size $n-o(n)$---indeed if we remove $\epsilon n$ rows of a Latin square we obtain a Latin rectangle to which Theorem~\ref{HaggvistTheorem} can be applied.

In this paper we will look at a strengthening of Conjecture~\ref{BrualdiStein}. The strengthening we'll look at is a conjecture due to Aharoni and Berger which takes place in a more general setting than Latin squares---namely coloured bipartite graphs. 
To see how the two settings are related, notice that there is a one-to-one correspondence between $n\times n$ Latin squares and proper edge-colourings of $K_{n,n}$ with $n$ colours---indeed to a Latin square $S$ we associate the colouring of $K_{n,n}$ with vertex set $\{x_1, \dots, x_n, y_1, \dots, y_n\}$ where the edge between  $x_i$  and  $y_j$ receives colour $S_{i,j}$. It is easy to see that in this setting transversals in $S$ correspond to perfect rainbow matchings in $K_{n,n}$ (a matching is \emph{rainbow} if all its edges have different colours). Thus Conjecture~\ref{BrualdiStein} is equivalent to the statement that ``in any proper $n$-colouring of $K_{n,n}$, there is a rainbow matching of size $n-1$''.

One could ask whether a large rainbow matching exists in more general bipartite graphs.
Aharoni and Berger posed the following conjecture, which generalises Conjecture~\ref{BrualdiStein}.
\begin{conjecture}[Aharoni and Berger, \cite{AharoniBerger}]\label{ConjectureAharoni}
Let $G$ be a bipartite graph consisting of $n$  matchings, each with at least $n+1$ edges. Then $G$ contains a rainbow matching with $n$ edges.
\end{conjecture}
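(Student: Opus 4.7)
The plan is to attack the conjecture by an augmenting-path style argument, exploiting the new notion of rainbow connectedness in coloured directed graphs that the abstract alludes to. Assume for contradiction that $G=M_1\cup\dots\cup M_n$ contains no rainbow matching of size $n$, and let $M$ be a rainbow matching of maximum size $m<n$, using (without loss of generality) colours $1,\dots,m$. Call the remaining colours $m+1,\dots,n$ \emph{free}; each $M_i$ for $i>m$ is a matching of size at least $n+1$. First I would record the basic consequence of maximality: for every free colour $c$ and every edge $f\in M_c$, at least one endpoint of $f$ lies in $V(M)$, for otherwise $M\cup\{f\}$ extends $M$. In particular, if some free $f\in M_c$ has exactly one endpoint on an edge $e=vw\in M$, then $(M\setminus\{e\})\cup\{f\}$ is a rainbow matching of the same size but with $w$ newly exposed and the colour of $e$ newly free. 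This ``swap'' is the atomic move that will drive the augmentation.

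Next I would build an auxiliary coloured digraph $H$ on vertex set $M\cup\{\star\}$, where for each free colour $c$ and each edge $f\in M_c$ meeting two edges $e,e'\in M$ we place a directed arc $e\to e'$ of colour $c$ (oriented so that swapping $e$ for $f$ leaves a new edge sitting on $e'$), and for each $f\in M_c$ with one endpoint outside $V(M)$ we place an arc from the corresponding $e\in M$ to the sink $\star$, also coloured $c$. A directed path from some $e\in M$ to $\star$ whose arcs have pairwise distinct free colours then translates, step by step, into a sequence of swaps producing a rainbow matching of size $m+1$, contradicting maximality of $M$. The task thus reduces to showing that $H$ contains such a \emph{rainbow directed path to the sink}.

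To produce one, I would aim at a structural dichotomy: either many free colours $c$ admit an arc into $\star$ directly (because $|M_c|\ge n+1>2m$ forces at least one edge of $M_c$ to avoid $V(M)$ on some side, or the matching $M_c$ has more edges than $M$ has vertices on one side of the bipartition), in which case a trivial augmentation works; or otherwise every $M_c$ is almost entirely contained in $V(M)$, and then each free colour contributes at least $\Omega(m)$ arcs to $H$, so that $H$ is dense in free colours. In the dense case I would develop the promised notion of rainbow connectedness — essentially a rainbow-Menger type statement — showing that a coloured digraph in which many colour classes each induce an almost-spanning oriented subgraph must contain a rainbow path from any prescribed source to any prescribed target. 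Feeding this back into $H$ with source $e$ and target $\star$ then completes the augmentation.

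The main obstacle, and the reason this conjecture has resisted attack, is the tightness of the hypothesis $|M_c|\ge n+1$: with any extra slack the arc count in $H$ explodes and naive rainbow-reachability arguments succeed (this is essentially what drives the $3n/2$ and $\phi n$ bounds), but at $n+1$ each free colour contributes only a barely-saturating number of arcs while there are only $n-m$ free colours available to colour a rainbow path, and a single ``dead'' colour can cut off an entire would-be path. Closing this gap seems to require a genuinely new rainbow-connectedness theorem robust to very small colour deficits; I expect the edge-disjoint hypothesis in the paper's main theorem is used precisely to prevent a free colour from being wasted by already appearing in $H$ via the edges of $M$ themselves, and that any full proof of the conjecture will need either an absorption-style reservoir of augmenting structures set aside in advance, or a global rerouting argument that recovers from locally stuck colours.
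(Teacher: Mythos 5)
The statement you were asked to address is a \emph{conjecture}: the paper does not prove it, explicitly treats it as open, and establishes only approximate versions (Theorem~\ref{MainTheorem} for edge-disjoint matchings of size $(1+\epsilon)n$, Theorem~\ref{GoldenRatioTheorem} for matchings of size $\phi n + o(n)$). You correctly recognise this and do not overclaim; there is nothing in the paper to compare a ``proof'' of Conjecture~\ref{ConjectureAharoni} against.

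On the substance of your sketch: the spirit is right — the paper's partial results are indeed obtained by building an auxiliary coloured digraph whose rainbow paths encode chains of swaps — but the specific digraph $H$ you define would fail even for those partial results. You colour the arcs of $H$ only by the \emph{initially} free colours $m+1,\dots,n$, so a rainbow path has length at most $n-m$; in the critical case $m=n-1$ (to which the proof of Theorem~\ref{GoldenRatioTheorem} reduces by induction, and which is the only hard case in general) there is a single free colour, $H$ is monochromatic, and a rainbow path has length at most $1$. The cascade you describe in prose — ``the colour of $e$ newly free'' — already contains the fix you are missing: each swap liberates the colour of the discarded matching edge, and the \emph{next} step of the cascade spends that colour, which is not initially free and therefore produces no arcs of your $H$ at all. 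The paper's digraph $D_{X'}$ resolves this by dualising: its \emph{vertices} are the colours (so an arc $u\to v$ records a colour-$u$ edge from some $x\in X'$ to $(v)_Y$, and the colour spent at each step is that of the vertex just departed — exactly the colour freed by the previous swap), while the rainbowness constraint is carried by the \emph{labels}, namely the $X$-vertices (or $X_0\cup Y_0$-vertices in Section~\ref{SectionGoldenRatio}), which is what prevents the chain of swaps from colliding with itself. Your closing diagnosis of the difficulty — that edge-disjointness controls colour wastage, and that a full proof likely needs absorption or global rerouting — is a reasonable reading of the landscape, but the immediate defect in your construction is the choice of what to colour by, not the scarcity of free colours per se.
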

In the above conjecture we think of the $n$ matchings forming $G$ as having different colours, and so ``rainbow matching'' means a matching containing one edge from each matching in $G$. It is worth noting that the above conjecture does not require the matchings in $G$ to be disjoint i.e. it is about bipartite multigraphs rather than simple graphs.
This  conjecture was first posed in a different form in~\cite{AharoniBerger} as a conjecture about matchings in tripartite hypergraphs (Conjecture 2.4 in \cite{AharoniBerger}). It was first stated as a conjecture about rainbow matchings in~\cite{AharoniCharbitHoward}. 

The above conjecture has attracted a lot of attention recently, and there are many partial results. Just like in Conjecture~\ref{BrualdiStein}, one natural way of attacking Conjecture~\ref{ConjectureAharoni} is to prove approximate versions of it. As observed by Barat, Gy\'arf\'as, and S\'ark\"ozy \cite{Barat}, the arguments that Woolbright, Brower, de Vries, and Wieringa used to find  partial transversals of size size $n-\sqrt n$ in Latin squares actually generalise to bipartite graphs to give the following.
\begin{theorem}[Woolbright, \cite{Woolbright}; Brower, de Vries, and Wieringa, \cite{Brower}; Barat, Gy\'arf\'as, and S\'ark\"ozy, \cite{Barat}]\label{Woolbright}
Let $G$ be a bipartite graph consisting of $n$ matchings, each with at least $n$ edges. Then $G$ contains a rainbow matching with $n-\sqrt{n}$ edges.
\end{theorem}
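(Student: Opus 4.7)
The plan is a greedy switching/augmentation argument of the sort pioneered by Woolbright for Latin squares. Let $M$ be a maximum rainbow matching in $G$ with $|M| = m$, label so that $M = \{e_1, \dots, e_m\}$ with $e_i$ of colour $i$, and let $U = \{m+1, \dots, n\}$ be the set of unused colours. Assume for contradiction that $m < n - \sqrt n$, so $|U| > \sqrt n$. The basic observation, forced by maximality, is that every edge of every colour $c \in U$ meets $V(M)$; otherwise such an edge could simply be appended to $M$. Since each colour class has at least $n$ edges while $|V(M)| = 2m$, this constrains the unused-colour edges tightly: many of them must fall with one or even both endpoints inside $V(M)$.

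The basic manoeuvre I would perform is a \emph{switch}: given an edge $f$ of a currently-unused colour $c$ meeting exactly one matching edge $e_i$, replace $e_i$ by $f$. This produces a new maximum rainbow matching of the same size in which colour $c$ is now used and colour $i$ is now unused, so the set $U$ is updated but its size is unchanged. An actual augmentation occurs when, after some sequence of switches, one of the currently-unused colours has an edge whose endpoints both lie outside the current matching; that edge can then be added, contradicting maximality. To organise this search I would construct a ``switching tree'' rooted at an initial colour $c_0 \in U$: the nodes are colours in $[n]$, the children of a node $c$ are the indices $i$ for which some well-chosen edge of colour $c$ meets $e_i$, and the invariant of the BFS is that the sequence of switches along any root-to-node path is jointly realisable (requiring some care in choosing edges at each level so they do not conflict on vertices).

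The main obstacle, and the place where the exponent $\tfrac12$ in $\sqrt n$ appears, is the counting argument showing that a tree of depth less than $\sqrt n$ must either yield an augmentation or saturate $V(M)$. Roughly, at each depth the switching possibilities are constrained by $V(M)$ having only $2m < 2n$ vertices, while each colour contributes at least $n$ edges into that region; if no augmentation is found within depth $\sqrt n$, the accumulated edge-vertex incidences from the $|U| > \sqrt n$ unused colours exceed the capacity of $V(M)$, producing the desired contradiction. This is the Hungarian-style alternating-forest argument carried over from Latin squares to bipartite multigraphs, which is exactly what Barat, Gy\'arf\'as and S\'ark\"ozy observed, and it yields the bound $m \geq n - \sqrt n$ as claimed.
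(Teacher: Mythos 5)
The paper does not actually prove this theorem---it is cited from Woolbright, from Brouwer--de Vries--Wieringa, and (in this bipartite generality) from Barat--Gy\'arf\'as--S\'ark\"ozy, so there is no ``paper's own proof'' to match against. Your high-level framework (maximum rainbow matching $M$, the observation that maximality forces every edge of a missing colour into $V(M)$, and an alternating/switching tree rooted at one missing colour $c_0$) is the right shape of the Woolbright argument.

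The gap is in the counting step, which is where the exponent $\tfrac12$ actually has to be earned and which you leave at the ``roughly'' level. The phrasing ``the accumulated edge-vertex incidences from the $|U|>\sqrt n$ unused colours exceed the capacity of $V(M)$'' suggests a static incidence bound over all missing colours against $|V(M)|=2m$, but that computation only yields something like $n\cdot|U|\le n\cdot 2m$, i.e.\ $m\ge n/3$, nowhere near $m\ge n-\sqrt n$; it is also inconsistent with the tree being rooted at a single $c_0$. The actual mechanism is different: write $s=n-m$, fix one missing colour $c_0$, and track the set $F_t$ of \emph{matching colours} reachable from $c_0$ by a jointly realisable switching sequence of length at most $t$. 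Because each reachable colour $c$ has at least $n$ edges, at most $m$ of which can have their $X$-endpoint inside $X\cap V(M)$, and at most $O(t)$ of which can collide with vertices already committed along the switching path, one shows $|F_{t+1}|\ge |F_t|+s-O(t)$ as long as no augmenting edge into $Y\setminus V(M)$ has appeared. Iterating up to $t\approx s$ gives $|F_s|\gtrsim s^2/2$, and since $F_s$ is a set of colours of $M$ we have $|F_s|\le m<n$, forcing $s\lesssim\sqrt n$. So the quantity that grows quadratically is the number of \emph{distinct reachable colours}, not an incidence count with $V(M)$; making that explicit, and verifying the joint realisability of the chosen switch at each level (your ``requiring some care'' remark), is where the real work of the proof lives.
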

Barat, Gy\'arf\'as, and S\'ark\"ozy actually proved something a bit more precise in \cite{Barat}---for every $k$, they gave an upper bound on the number of matchings of size $n$ needed to find a rainbow matching of size $n-k$.

Another approximate version of Conjecture~\ref{ConjectureAharoni} comes from Theorem~\ref{HaggvistTheorem}. It is easy to see that Theorem~\ref{HaggvistTheorem} is equivalent to the following ``let $G$ be a bipartite graph consisting of $n$ edge-disjoint perfect matchings, each with at least $n+o(n)$ edges. Then $G$ can be decomposed into disjoint rainbow matchings of size $n$'' (to see that this is equivalent to Theorem~\ref{HaggvistTheorem}, associate an $m$-edge-coloured bipartite graph with any $m\times n$ Latin rectangle by placing a colour $k$ edge between $i$ and $j$ whenever $(k,i)$ has symbol $j$ in the rectangle).

The main result of this paper is an approximate version of Conjecture~\ref{ConjectureAharoni} in the case when the matchings in $G$ are disjoint, but not necessarily perfect.
\begin{theorem}\label{MainTheorem}
For all $\epsilon>0$, there exists an $N=N(\epsilon)=10^{20}\epsilon^{-16\epsilon^{-1}}$ such that the following holds.
Let $G$ be a bipartite graph consisting on $n\geq N$ edge-disjoint matchings, each with at least $(1+\epsilon)n$ edges. Then $G$ contains a rainbow matching with $n$ edges.
\end{theorem}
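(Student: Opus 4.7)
The plan is to argue by contradiction. Suppose $G$ contains no rainbow matching of size $n$, and fix a rainbow matching $M = \{m_1,\dots,m_k\}$ of maximum size $k < n$, with $m_i$ having colour $i$. Let $C$ denote the set of unused colours, so $|C| = n-k \geq 1$. For every $c \in C$, each edge of the colour-$c$ matching $M_c$ must meet $V(M)$, since otherwise that edge could be appended to $M$. As $M_c$ is itself a matching, this forces $(1+\epsilon)n \leq |M_c| \leq |V(M)| = 2k$, so $k \geq (1+\epsilon)n/2$ and $M$ already covers at least $(1+\epsilon)n$ vertices.

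Next I would build an auxiliary coloured directed graph $D$ on the vertex set $\{m_1,\dots,m_k\}$. For each $c \in C$ and each $m_i \in M$, examine the edges of $M_c$ incident to the two endpoints of $m_i$: such an edge either reaches a free vertex, which I record as a ``$c$-exit'' from $m_i$, or reaches a vertex of some other $m_j$, in which case I add a $c$-coloured arc $m_i \to m_j$ to $D$. Edge-disjointness of the original matchings is crucial here, because it ensures that arcs of different colours at the same vertex correspond to genuinely distinct edges of $G$, which is what will allow me to translate a rainbow walk in $D$ into a vertex-disjoint alternating structure in $G$. A simple double count from $|M_c| \geq (1+\epsilon)n$ and $|V(M)| \leq 2n$ then gives that on average each $m_i$ has large rainbow out-degree in $D$.

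The heart of the proof is to show that any such maximum $M$ gives rise to a rainbow walk $m_{i_1} \xrightarrow{c_1} m_{i_2} \xrightarrow{c_2} \cdots \xrightarrow{c_t} m_{i_{t+1}}$ in $D$, using distinct vertices and distinct colours from $C$, that terminates in a free exit via a further unused colour $c_{t+1}$. From such a walk one builds an alternating structure removing $m_{i_1},\dots,m_{i_{t+1}}$ from $M$ and inserting $t+1$ new rainbow edges, enlarging the rainbow matching and contradicting maximality. To produce the walk I would extend it greedily, using the large rainbow out-degree at the current endpoint at each step and rerouting whenever the next arc conflicts with previously chosen edges or vertices.

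The main obstacle---and presumably the place where the paper's new notion of ``rainbow connectedness'' in coloured directed graphs is introduced---is controlling this greedy growth so that it never gets stuck. At each step one must avoid vertices and colours already used, and the underlying $G$-edges must be compatible with the partial alternating structure; showing that enough surplus out-degree survives to force either a further extension or a free exit, over up to $\Theta(\epsilon^{-1})$ iterations, is the delicate part. This is plausibly what forces the dependence $N(\epsilon) = 10^{20}\epsilon^{-16\epsilon^{-1}}$: each iteration multiplies the required hypotheses by some $\epsilon^{-O(1)}$ factor, and $\epsilon^{-1}$ iterations compound. Verifying that edge-disjointness (rather than the much stronger perfect-matching hypothesis of H\"aggkvist--Johansson) is enough to keep colour classes robust throughout these iterations is, I expect, the key technical lemma.
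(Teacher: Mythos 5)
Your sketch captures the setup (maximum rainbow matching $M$, an auxiliary digraph on the colours/matching edges, translating rainbow paths in the digraph into alternating ``switchings'' in $G$, and the role of edge-disjointness in keeping labels proper), but it fundamentally misses how the paper gets from ``large out-degree in the digraph'' to a contradiction. The walk you describe --- grow a single rainbow walk greedily from the missing colour until it reaches a free exit, then swap along it --- is essentially Woolbright's argument, and that argument only yields a rainbow matching of size $n-\sqrt{n}$ (Theorem~\ref{Woolbright}). It cannot by itself give size $n$ under the hypothesis that matchings have size $(1+\epsilon)n$: the greedy walk may well get stuck, and ``rerouting'' a single walk is not enough to escape. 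You correctly flag this as ``the main obstacle'' and guess that rainbow connectedness is involved, but you do not supply the mechanism, which is exactly the content of the theorem.

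The missing idea is the notion of a \emph{free} set $X'\subseteq X$ (Definition~\ref{DefinitionFree}): a set with the property that for any small set $A$ of matching edges to preserve and small $B\subseteq X'$ to avoid, there exists a rainbow matching $M'$ of size $n$ agreeing with $M$ on $A$, avoiding $B$, and still missing a prescribed colour. Instead of a single augmenting walk, the proof runs an iteration (Lemma~\ref{IncrementLemma}): given a free set $X_i$, one passes to the digraph $D_{X_i}$, finds inside it a large $(k,d)$-connected set $W$ via Lemma~\ref{LargeRainbowConnectedSetLemma}, and uses a Menger-type routing lemma (Lemma~\ref{MengerLemma}) through $W$ to certify that $(W)_X\cup X_0$ is again free, with a worse freeness parameter $k$ but with $|X_{i+1}|\geq |X_i|+\tfrac{\epsilon_0}{2}n$. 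After roughly $2\epsilon_0^{-1}$ iterations the free sets exceed $|X|$, a contradiction; the compounding $k_{i+1}=\Theta(\epsilon_0^2)k_i$ at each stage is what produces the tower-type bound $10^{20}\epsilon^{-16\epsilon^{-1}}$ in $N(\epsilon)$. In short: the proof is not ``grow one rainbow walk until it exits'' but ``grow a nested family of free sets until they overflow $X$,'' and the connectedness machinery of Section~\ref{SectionConnectedness} is what powers each enlargement step. Without this structural layer, your outline does not close.
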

Unlike the proof of Theorem~\ref{HaggvistTheorem} which can be used to give a randomised process to find a rainbow matching, the proof of Theorem~\ref{MainTheorem} is algorithmic i.e. the matching guaranteed by Theorem~\ref{MainTheorem} can be found in polynomial time.

Another very natural approach to Conjecture~\ref{ConjectureAharoni} is to to prove it when the matchings have size much larger than $n+1$. When the matchings have size $2n$, then the result becomes trivial.
\begin{lemma}\label{GreedyLemma}
Let $G$ be a bipartite graph consisting of $n$ matchings, each with at least $2n$ edges. Then $G$ contains a rainbow matching with $n$ edges.
\end{lemma}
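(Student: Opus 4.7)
The plan is to prove this by a straightforward greedy argument, building the rainbow matching one colour at a time in the natural order $M_1, M_2, \dots, M_n$.

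Formally, I would proceed by induction on $k$, showing that for each $k \leq n$ we can choose edges $e_1 \in M_1, \dots, e_k \in M_k$ that together form a (rainbow) matching. Suppose we have already selected $e_1, \dots, e_{k-1}$ with $k-1 < n$. These edges together cover exactly $2(k-1)$ vertices of $G$; call this set $S$. We seek an edge $e_k \in M_k$ disjoint from $S$. The key observation is that since $M_k$ is itself a matching, each vertex of $S$ is incident to \emph{at most one} edge of $M_k$. Hence at most $|S| = 2(k-1)$ edges of $M_k$ meet $S$, so the number of admissible edges in $M_k$ is at least
\[
|M_k| - 2(k-1) \;\geq\; 2n - 2(n-1) \;=\; 2 \;>\; 0.
\]
Any such admissible edge may be chosen as $e_k$, completing the induction step.

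The base case $k=1$ is trivial since $M_1$ is nonempty. Iterating up to $k = n$ yields the desired rainbow matching of size $n$. There is no real obstacle here: the argument uses nothing beyond the fact that each $M_k$ is a matching (so each forbidden vertex kills at most one candidate edge), and the bound $2n$ on the matching size is exactly what is needed for the count $|M_k| - 2(k-1)$ to stay positive for all $k \leq n$.
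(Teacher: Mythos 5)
Your proof is correct and is exactly the greedy argument the paper sketches: select edges colour by colour, noting that since each $M_k$ is a matching, the $2(k-1)$ already-covered vertices forbid at most $2(k-1) < 2n$ edges of $M_k$. You have simply written out the counting that the paper leaves implicit.
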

This lemma is proved by greedily choosing  disjoint edges of different colours. We can always choose $n$ edges this way, since each colour class has $2n$ edges (one of which must be disjoint from previously chosen edges).

There have been several improvements to the $2n$ bound in Lemma~\ref{GreedyLemma}. Aharoni, Charbit, and Howard \cite{AharoniCharbitHoward} proved that matchings of size $7n/4$ are sufficient to guarantee a rainbow matching of size $n$. Kotlar and Ziv \cite{KotlarZiv} improved this to $5n/3$.  Clemens and Ehrenm\"uller \cite{DennisJulia} further improved the constant to $3n/2$ which is currently the best known bound.
\begin{theorem}[Clemens and Ehrenm\"uller, \cite{DennisJulia}]
Let $G$ be a bipartite graph consisting of $n$ matchings, each with at least $3n/2+o(n)$ edges. Then $G$ contains a rainbow matching with $n$ edges.
\end{theorem}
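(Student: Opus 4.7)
The plan is to assume for contradiction that the maximum rainbow matching $M$ has size $k \leq n-1$ and derive an augmentation via a sequence of colour-swaps. Write $M = \{e_1, \dots, e_k\}$ with $e_i$ of colour $c_i$, so $|V(M)| = 2k$, and fix any unused colour $c_0$ together with its matching $M_0$ of size $m \geq 3n/2 + o(n)$.

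First I would run a counting argument. By the maximality of $M$, every edge of $M_0$ meets $V(M)$; letting $a$ and $b$ denote the numbers of edges of $M_0$ with two and with exactly one endpoint in $V(M)$, the inequalities $a + b \geq m$ and $2a + b \leq |V(M)| = 2k$ combine to give $b \geq 2m - 2k \geq n + o(n)$. Thus there is a large set $B \subseteq M_0$ of edges each blocking a single edge of $M$; swapping any $e \in B$ with its blocked partner $e_i$ produces a new rainbow matching of size $k$ in which $c_0$ is used, the colour $c_i$ has become unused, and one new vertex $v$ has been freed.

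Next I would iterate. A swap augments immediately whenever the newly-unused colour class $M_{c_i}$ contains an edge at $v$, or at a previously-free vertex, whose other endpoint is also free; otherwise I apply the counting step again to $M_{c_i}$ against the updated vertex set, producing a fresh family of $\geq n + o(n)$ type-$1$ blockers for another swap. Repeated application builds a rooted tree of reachable configurations, and the aim is to show that some branch reaches an augmentation, pushing $|M|$ to $k+1$ and contradicting maximality. Theorem~\ref{Woolbright} can be used to reduce to the case $k \geq n - \sqrt{n}$, so only a bounded amount of progress actually needs to be made by the swap tree.

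The hard part will be controlling conflicts between swaps: two type-$1$ blockers sharing a vertex cannot both be applied, and a naive iteration risks cycling back to an already-used colour or overwriting a freed vertex. To handle this I would formalise the reachable configurations as walks in a directed auxiliary graph whose vertices are colours and whose arcs encode compatible swaps, then track the arc-counts so that the excess $2m - 2k = \Theta(n)$ coming out of the counting step forces genuine expansion at every level, ensuring that within bounded depth an augmenting configuration is unavoidable. Quantifying this expansion is exactly the place where the constant $3/2$ enters and where the weaker $5/3$ and $7/4$ bounds of Kotlar-Ziv and Aharoni-Charbit-Howard give out.
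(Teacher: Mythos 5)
The paper does not prove this statement; it cites it as a result of Clemens and Ehrenm\"uller \cite{DennisJulia} and only proves the strictly weaker Theorem~\ref{GoldenRatioTheorem} (with $\phi n$ in place of $3n/2$), so there is no proof in this paper against which to check yours. Taking your sketch on its own terms, the opening counting step is sound: if the largest rainbow matching $M$ has $k\le n-1$ edges and $M_0$ is an unused colour class of size $m\ge 3n/2+o(n)$, then writing $a$ and $b$ for the numbers of edges of $M_0$ with two, respectively exactly one, endpoint in $V(M)$, maximality gives $a+b\ge m$, the matching property of $M_0$ gives $2a+b\le 2k$, and subtracting yields $b\ge 2m-2k\ge n+o(n)$. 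That part is fine and is the standard starting point for all the results in this range.

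The genuine gap is everything after the first swap. You acknowledge that the iterated swaps conflict with one another and with previously freed vertices, and you propose to resolve this by passing to a directed auxiliary graph on colours and arguing that the excess $2m-2k=\Theta(n)$ forces expansion that makes an augmenting configuration unavoidable within bounded depth --- but this is precisely the content of the theorem, and it is asserted rather than proved. Concretely: after one swap the new free vertex $v$ lies on the \emph{opposite} side of the bipartition from the vertex $w$ you just covered, the counting bound for the newly freed colour class now has to be taken against the updated vertex set $V(M')$, and a blocker of the new class landing on a vertex already used by an earlier swap (or on the $c_0$-edge you just inserted) cannot be applied. Nothing in the sketch rules out the $\Theta(n)$-sized blocker sets collapsing onto a small shared set of vertices, which is exactly the failure mode that separates $3/2$ from the trivial bound of $2$; ``track arc-counts'' and ``force genuine expansion'' names the difficulty without addressing it. The reduction via Theorem~\ref{Woolbright} to $k\ge n-\sqrt n$ also does not bound the depth of your swap tree: you still need one full augmentation argument per missing edge, and the depth of each individual tree is unconstrained a priori. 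For comparison, the paper's route to the weaker $\phi n$ bound avoids the swap tree entirely: it inducts on $n$, builds a single labelled digraph $D$ on colours, uses Lemma~\ref{CloseSubgraphsLowExpansion} to carve out a low-expansion neighbourhood $A$ of the missing colour, applies Theorem~\ref{Woolbright} to saturate most of $A$ by a rainbow matching on one side, and then recurses on the leftover colours on the other side; the Golden Ratio falls out of balancing $|A_1|$ against the induction hypothesis, so there is no iteration whose termination needs to be controlled. To turn your sketch into a proof you would need to formulate and prove a precise expansion or Menger-type lemma for the auxiliary digraph (roughly the role played by Lemmas~\ref{LargeConnectedSetLemma}--\ref{CloseHighDegreeSubgraph} in this paper) and show quantitatively where $3/2$ enters; as written, the argument stops exactly where the work begins.
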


Though we won't improve this theorem, we give an alternative proof which gives a weaker bound of $\phi n$ where $\phi\approx 1.618$ is the Golden Ratio.
\begin{theorem}\label{GoldenRatioTheorem}
Let $G$ be a bipartite graph consisting of $n$ matchings, each with at least $\phi n + 20n/\log n$ edges. Then $G$ contains a rainbow matching with $n$ edges.
\end{theorem}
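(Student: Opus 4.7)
The plan is a contradiction argument. Suppose no rainbow matching of size $n$ exists in $G$ and let $M$ be a rainbow matching of maximum size. Standard results on rainbow matchings (for instance Hatami and Schor's $n-O(\log^{2}n)$ bound, applied with plenty of slack from the hypothesis $|M_c|\geq \phi n+20n/\log n$) force the maximum size to be at least $n-1$, so we may assume $|M|=n-1$ with edges $f_2,\dots,f_n$ of colours $2,\dots,n$ and missing colour $1$.

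I would then run a BFS of ``swaps'' to build a set $S\subseteq[n]$, starting from $S=\{1\}$, as follows: while there exists $c\in S$, a rainbow matching $M'$ of size $n-1$ missing $c$ reachable from $M$ by previously recorded swaps, and an edge $e\in M_c$ meeting $M'$ in exactly one vertex that lies on some $f_{c'}\in M'$ with $c'\notin S$, replace $M'$ by $(M'\setminus\{f_{c'}\})\cup\{e\}$ and add $c'$ to $S$. Maximality of $M$ rules out the degenerate outcome $e\cap V(M')=\emptyset$, since that would directly produce a rainbow matching of size $n$. When the process stops, every edge $e\in M_c$ with $c\in S$ either meets the terminal matching $M'$ in at least two vertices, or meets it in a single vertex already lying on an edge of $M_S:=\{f_{c'}:c'\in S\}$; in either case $e$ has at least one endpoint in $V(M_S)$. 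Writing $s=|S|$ and using that $M_c$ is a matching, at most $2s$ of its edges can have only one endpoint in $V(M_S)$, so at least $|M_c|-2s\geq \phi n-2s+20n/\log n$ edges of $M_c$ lie entirely inside $V(M_S)$.

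To close the argument I would apply the rainbow-matching machinery developed in the paper — this is where the new notion of connectedness in coloured digraphs enters — to the sub-instance on $V(M_S)$ with the $s$ colour classes restricted to their internal edges, extracting a rainbow matching $R$ that uses each colour of $S$ exactly once. Combining $R$ with $\{f_c:c\in[n]\setminus S\}$ gives a rainbow matching of size $(n-s)+s=n$ (the colour-$1$ edge being contributed by $R$, since $1\in S$), which is the desired contradiction. The Golden Ratio appears when balancing the two quantitative sides of the extraction: the internal colour classes in the sub-instance have at least $\phi n-2s$ edges on $2s$ vertices, while the extraction tool requires each such class to have roughly $s/\phi$ edges, and these produce a self-consistent inequality whose critical threshold is the positive root of $\phi^{2}=\phi+1$. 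The main obstacle will be this extraction step: naive rainbow-matching bounds (Lemma~\ref{GreedyLemma} or Theorem~\ref{Woolbright} applied inside $V(M_S)$) give only the weaker constants $\phi n/4$ and $\phi n/3$, so one genuinely needs the paper's new notion of connectedness in coloured digraphs to exploit the BFS-tree structure of the swap process rather than raw edge counts. The $20n/\log n$ slack absorbs the losses from the initial reduction to $|M|=n-1$ and from the final extraction.
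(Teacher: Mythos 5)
Your proposal diverges from the paper's argument at several points, and a few of the key steps have concrete gaps.

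First, the reduction to $|M|=n-1$ via Hatami--Schor is unjustified: that result yields a partial transversal of size $n-O(\log^2 n)$, which does not give a rainbow matching of size exactly $n-1$. The paper achieves this reduction by induction on $n$: if the theorem holds for all $n'<n$, then applying it to any $n-1$ of the colour classes (each of size $\phi n+20n/\log n > \phi(n-1)+20(n-1)/\log(n-1)$) yields a rainbow matching of size $n-1$, so a maximum rainbow matching $M$ has $|M|\geq n-1$, and we may assume $|M|=n-1$.

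Second, the conclusion you draw from the swap-BFS does not follow from your case analysis, and the resulting counting is backwards. When the process terminates, for $c\in S$ and $e\in M_c$ you have: either $e$ meets $V(M')$ (the terminal matching, covering $2(n-1)$ vertices) in at least two vertices, or $e$ meets $V(M')$ in exactly one vertex lying on an edge of colour in $S$. In the first case $e$ can have both endpoints in $V(M')\setminus V(M_S)$, so ``$e$ has an endpoint in $V(M_S)$'' does not follow. Moreover, even granting that every edge of $M_c$ touched $V(M_S)$, the arithmetic goes the wrong way: a matching all of whose edges hit a $2s$-vertex set has \emph{at most} $2s$ edges, so $|M_c|-2s\le 0$ and the claimed lower bound ``at least $|M_c|-2s$ edges lie entirely inside $V(M_S)$'' is vacuous; the true constraint is an \emph{upper} bound of $2s-|M_c|$ on the number of internal edges.

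Third, the final ``extraction'' step is left entirely to ``the paper's new notion of connectedness in coloured digraphs,'' but the paper does not use that machinery (Lemma~\ref{LargeRainbowConnectedSetLemma} and its relatives) for this theorem. Theorem~\ref{GoldenRatioTheorem} is proved using only Lemma~\ref{CloseSubgraphsLowExpansion} together with the classical Theorem~\ref{Woolbright} and the inductive hypothesis. Concretely, the paper defines a directed graph on colours whose edges originate from \emph{both} $X_0$ and $Y_0$, shows (Claim~\ref{GoldRatioFewXYedges}) that colours at small rainbow distance from $c^*$ have few edges between $X_0$ and $Y_0$, finds via the low-expansion lemma a ball $A$ of colours around $c^*$ with small out-boundary, and concludes that for $a\in A$ almost all colour-$a$ edges from $X_0$ land in $A_Y$ (and symmetrically from $Y_0$ into $A_X$). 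Woolbright applied to the $X_0$--$A_Y$ side matches a subset $A_0\subseteq A$ with $|A_0|\geq(\phi-1)n-O(\sqrt n)$, and induction applied to the $Y_0$--$A_X$ side handles $A_1=A\setminus A_0$. The Golden Ratio appears because $|A_1|\leq(2-\phi)n+O(\sqrt n)$ while the colour classes restricted to $Y_0$--$A_X$ have size about $(\phi-1)n$, and $\phi(2-\phi)=\phi-1$ by $\phi^2=\phi+1$. Your sketch does not produce this two-sided split, and it is the split (one side via Woolbright, the other via induction) rather than a single extraction inside $V(M_S)$ that makes the quantitative balance close.
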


We'll spend the rest of this section informally discussing the methods which we use to prove Theorems~\ref{MainTheorem} and~\ref{GoldenRatioTheorem}. The basic idea is to introduce an auxiliary coloured directed graph and then apply some simple lemmas about directed graphs. The results and concepts about coloured graphs which we use are perhaps of independent interest. These results are all gathered in Section~\ref{SectionConnectedness}. The key idea in the proof of Theorem~\ref{MainTheorem} seems to be a new notion of connectivity of coloured graphs.
\begin{definition}\label{DefinitionConnectivity}
An edge-coloured graph $G$ is said to be  rainbow $k$-edge-connected if for any set of at most $k-1$ colours $S$ and any pair of vertices $u$ and $v$, there is a rainbow $u$ to $v$ path whose edges have no colours from $S$.
\end{definition}
The above definition differs from usual notions of connectivity, since generally the avoided set $S$ is a set of \emph{edges} rather than colours. 
As we shall see, in some ways Definition~\ref{DefinitionConnectivity} is perhaps \emph{too strong}. In particular the natural analogue of Menger's Theorem for  rainbow $k$-edge-connected graphs fails (see Section~\ref{SectionConclusion}). Nevertheless,  rainbow $k$-edge-connected graphs turn out to be very useful for studying rainbow matchings in bipartite graphs. It would be interesting to know whether any statements about edge-connectivity have natural generalizations to  rainbow $k$-edge-connected graphs.

The structure of this paper is as follows. In Section~\ref{SectionConnectedness} we introduce variations on Definition~\ref{DefinitionConnectivity} and prove a number of lemmas about coloured and directed graphs.
In Section~\ref{SectionMainTheorem} we prove Theorem~\ref{MainTheorem}.
In Section~\ref{SectionGoldenRatio} we prove Theorem~\ref{GoldenRatioTheorem}.
In Section~\ref{SectionConclusion} we make some concluding remarks about the techniques used in this paper. For all standard notation we follow~\cite{BollobasModernGraphTheory}.

\section{Paths in directed and coloured graphs}\label{SectionConnectedness}
In this section we prove results about paths in various types of directed graphs. All graphs in this section have no multiple edges, although we allow the same edge to appear twice in opposite directions.
In directed graphs, ``path'' will always mean a sequence of vertices $x_1, \dots, x_k$ such that $x_ix_{i+1}$ is a directed edge for $i=1, \dots, k-1$. We will use additive notation for concatenating paths---for two paths $P=p_1 \dots p_i$ and $Q=q_1 \dots q_j$, $P + Q$ denotes the path with vertex sequence $p_1\dots p_i q_1\dots q_j$.
Recall that $N^+(v)$ denotes the out-neighbourhood of a vertex $v$ i.e. the set of vertices $x$ for which $vx$ is an edge. We will sometimes have two graphs $G$ and $H$ on the same vertex set. In this case $N^+_G(v)$ and $N^+_H(v)$ denote the out-neighbourhoods of $v$ in $G$ and $H$ respectively. Similarly $d_G(u,v)$ and $d_H(u,v)$ denote the lengths of the shortest part from $u$ to $v$ in $G$ and $H$ respectively.

We will look at coloured graphs. An edge-colouring of a graph is an arbitrary assignment of colours to the edges of a graph. A total colouring is an arbitrary assignment of colours to both the vertices and edges of a graph. For any coloured graph we denote by $c(v)$ and $c(uv)$ the colour assigned to a vertex or edge respectively.

An edge-colouring is out-proper if for any vertex $v$, the outgoing edges from $v$ all have different colours. Similarly an edge-colouring is in-proper if for any vertex $v$, the ingoing edges from $v$ all have different colours. We say that an edge colouring is proper if it is both in and out-proper (notice that by this definition it is possible to have two edges with the same colour at a vertex $v$---as long as one of the edges is oriented away from $v$ and one is oriented towards $v$).
A total colouring is proper if the underlying edge colouring and vertex colourings are proper and the colour of any vertex is different from the colour of any edge containing it.
A totally coloured graph is \emph{rainbow} if all its vertices and edges have different colours. 
For two vertices $u$ and $v$ in a coloured graph, $\dRainbow{u,v}$ denotes the length of the shortest rainbow path from $u$ to $v$.
We say that a graph has \emph{rainbow vertex set} if all its vertices have different colours.

This section will mostly be about finding highly connected subsets in directed graphs. The following is the notion of connectivity that we will use.
\begin{definition}\label{DefinitionUncolouredkdCon}
 Let $A$ be a set of vertices in a digraph $D$. We say that $A$ is $(k,d)${-connected} in $D$ if, for any set of vertices $S\subseteq V(D)$ with $|S|\leq k-1$ and any vertices $x,y \in A\setminus S$, there is an $x$ to $y$ path of length $\leq d$ in $D$ avoiding $S$. 
\end{definition}
Notice that a directed graph $D$ is strongly $k$-connected if, and only if, $V(D)$ is $(k, \infty)$-connected in $D$. Also notice that it is possible for a subset $A\subseteq V(D)$ to be highly connected without the induced subgraph $D[A]$ being highly connected---indeed if $D$ is a bipartite graph with classes $X$ and $Y$ where all edges between $X$ and $Y$ are present in both directions, then $X$ is a $(|Y|, 2)$-connected subset of $D$, although the induced subgraph on $X$ has no edges.

We will also need a generalization this notion of connectivity to coloured graphs
\begin{definition}\label{DefinitionColouredkdCon}
 Let $A$ be a set of vertices in a coloured digraph $D$. We say that $A$ is $(k,d)${-connected} in $D$ if, for any set of at most $k-1$ colours $S$ and any vertices $x,y \in A$, there is a rainbow $x$ to $y$ path of length $\leq d$ in $D$ internally avoiding colours in $S$. 
\end{definition}
Notice that in the above definition, we did not specify whether the colouring was a  edge colouring, vertex colouring, or total colouring. The definition makes sense in all three cases. For edge colourings a path $P$ ``internally avoiding colours in $S$'' means $P$ not having edges having colours in $S$. For vertex colourings a path $P$ ``internally avoiding colours in $S$'' means $P$ not having vertices having colours in $S$ (except possibly for the vertices $x$ and $y$). For total colourings a path $P$ ``internally avoiding colours in $S$'' means $P$ having no edges or vertices with colours in $S$ (except possibly for the vertices $x$ and $y$).

Comparing the above definition to ``rainbow $k$-connectedness'' defined in the introduction we see that an edge-coloured graph is  rainbow $k$-connected exactly when it is $(k, \infty)$-connected.

We'll need the following lemma which could be seen as a weak analogue of Menger's Theorem. It will allow us to find rainbow paths through prescribed vertices in a highly connected set.
\begin{lemma}\label{MengerLemma}
Let $D$ be a totally coloured digraph and $A$ a $(3kd,d)$-connected subset of $D$.
Let $S$ be a set of colours with $|S|\leq k$ and $a_1, \dots, a_k$ be vertices in $A$ such that no $a_i$ has a colour from $S$ and $a_1, \dots, a_k$ all have different colours.

Then there is a rainbow path $P$ from $a_1$ to $a_k$ of length at most $kd$ which passes through each of $a_1, \dots, a_k$ and avoids $S$
\end{lemma}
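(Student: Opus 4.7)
The plan is to build the rainbow path $P$ by concatenating short rainbow paths produced by the $(3kd,d)$-connectedness of $A$, one per ``hop'' $a_i \to a_{i+1}$. Concretely, I would construct a sequence of rainbow paths $P_0, P_1, \ldots, P_{k-1}$, where $P_i$ runs from $a_1$ to $a_{i+1}$ and passes through $a_1, \ldots, a_{i+1}$ in that order, maintaining the invariant that $P_i$ has length at most $id$ and that its full colour set (on both vertices and edges) is disjoint from $S \cup \{c(a_{i+2}), \ldots, c(a_k)\}$. The base case is $P_0 := a_1$, which is trivially rainbow and avoids the relevant colours because the $a_j$ are assumed to have distinct colours, none of which lie in $S$.

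For the inductive step, given $P_{i-1}$, I would apply the $(3kd,d)$-connectedness of $A$ to the pair $(a_i, a_{i+1})$ with forbidden colour set
\[
T_i := S \cup C(P_{i-1}) \cup \{c(a_{i+2}), \ldots, c(a_k)\},
\]
where $C(P_{i-1})$ denotes the set of all vertex- and edge-colours appearing in $P_{i-1}$. This yields a rainbow path $Q_i$ of length at most $d$ from $a_i$ to $a_{i+1}$ whose interior avoids $T_i$, and I set $P_i := P_{i-1} + Q_i$. The concatenation is a simple rainbow path: each internal vertex of $Q_i$ carries a colour outside $C(P_{i-1})$, so, because every vertex in a totally coloured graph has a single colour, it cannot coincide with any vertex of $P_{i-1}$; the same argument rules out edge and vertex-edge colour collisions, and $c(a_{i+1}) \notin C(P_{i-1})$ by the invariant. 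Thus $P_i$ is a rainbow path of length at most $id$, and its colour set avoids $S \cup \{c(a_{i+2}), \ldots, c(a_k)\}$ as required.

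The one delicate point, and the place I expect to be the main obstacle, is the bookkeeping needed to keep $|T_i| \leq 3kd - 1$ at every iteration so that the connectedness hypothesis actually applies. Using $|C(P_{i-1})| \leq 2(i-1)d + 1$ (at most $(i-1)d$ edges and $(i-1)d + 1$ vertices), a direct count gives
\[
|T_i| \leq k + (2(i-1)d + 1) + (k - i - 1) = 2k + 2(i-1)d - i,
\]
which for $1 \leq i \leq k-1$ and $d \geq 1$ is comfortably at most $3kd - 1$. This is precisely where the factor of $3$ in the $(3kd,d)$-connectedness hypothesis is consumed, and I do not see how to avoid spending it. After $k-1$ iterations, $P_{k-1}$ is the desired rainbow $a_1$-to-$a_k$ path of length at most $(k-1)d \leq kd$, passing through each $a_i$ and avoiding $S$.
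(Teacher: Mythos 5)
Your proposal is correct and follows essentially the same route as the paper: iteratively apply the $(3kd,d)$-connectedness to find short rainbow paths from $a_i$ to $a_{i+1}$ whose interiors avoid $S$ together with all colours already used, then concatenate. Your version is in fact slightly more careful than the paper's, since by also forbidding $\{c(a_{i+2}),\dots,c(a_k)\}$ at step $i$ you explicitly rule out a later $a_j$ appearing in the interior of an earlier hop (which would break simplicity of the concatenation), a detail the paper leaves implicit.
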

\begin{proof}
Using the definition of $(3kd,d)$-connected, there is a rainbow path $P_1$ from $a_1$ to $a_2$ of length $\leq d$ avoiding colours in $S$. Similarly for $i\leq k$, there is a rainbow path $P_i$ from $a_i$ to $a_{i+1}$ of length $\leq d$ internally avoiding colours in $S$ and colours in $P_1, \dots, P_{i-1}$. Joining the paths $P_1, \dots, P_{k-1}$ gives the required path.
\end{proof}

To every coloured directed graph we associate an uncoloured directed graph where two vertices are joined whenever they have a lot of  short paths between them.
\begin{definition}\label{DefinitionDm}
Let $D$ be a totally coloured digraph and $m\in \mathbb{N}$. We denote by $D_m$ the uncoloured directed graph with $V(D_m)=V(D)$ and $xy$ an edge of $D_m$ whenever there are $m$ internally vertex disjoint paths $P_1, \dots, P_m$, each of length $2$ and going from $x$ to $y$ such that $P_1\cup \dots\cup P_m$ is  rainbow.
\end{definition}

It turns out that for properly coloured directed graphs $D$, the uncoloured graph $D_m$ has almost the same minimum degree as $D$. The following lemma will allow us to study short rainbow paths in  coloured graphs by first proving a result about short paths in uncoloured graphs.
\begin{lemma}\label{DCHighDegree}
For all $\epsilon>0$ and $m\in \mathbb{N}$, there is an $N=N(\epsilon,m)=(5m+4)/\epsilon^2$ such that the following holds.
Let $D$ be a properly totally coloured directed graph on at least $N$ vertices with rainbow vertex set. Then we have
$$\delta^+(D_m)\geq \delta^+(D) -\epsilon|D|.$$
\end{lemma}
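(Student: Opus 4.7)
The plan is to fix a vertex $x \in V(D)$ and bound $d^+_{D_m}(x)$ from below by directly estimating the number of admissible $y$. Write $d = \delta^+(D)$ and $n = |D|$. For each $y \in V(D)$, the length-$2$ walks $x \to z \to y$ correspond bijectively to elements of $M_y := N^+(x) \cap N^-(y)$, and $y \in N^+_{D_m}(x)$ amounts to choosing distinct $z_1, \dots, z_m \in M_y$ so that the union of the paths $xz_iy$ is rainbow. The argument splits into a greedy step, handling every $y$ for which a certain subset of $M_y$ is sufficiently large, and a double-counting step, showing that most $y$ meet the threshold.

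For the greedy step, let $B_y := \{z \in M_y : c(xz) = c(zy)\}$ be the set of $z$ whose two incident edges on the path $xzy$ already clash in colour; this is the only local obstruction that is not automatically ruled out by properness of the total colouring. I would show that if $|M_y \setminus B_y| \geq 5m+4$, then $y \in N^+_{D_m}(x)$, by picking $z_1, \dots, z_m$ from $M_y \setminus B_y$ one at a time. At step $i$, after $z_1, \dots, z_{i-1}$ have been fixed, properness of the total colouring and rainbowness of $V(D)$ automatically eliminate the vast majority of potential colour clashes at $z_i, xz_i, z_iy$, leaving only $O(i)$ forbidden colours per slot. Each such colour excludes at most one candidate $z$: for $c(z_i)$ because $V(D)$ is rainbow, for $c(xz_i)$ because the colouring is out-proper at $x$, and for $c(z_iy)$ because it is in-proper at $y$. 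A careful accounting bounds the total bad count by at most $5m+3$, so at every step a valid $z_i$ remains.

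For the counting step, I would double-count in two ways. First, $\sum_y |M_y| = \sum_{z \in N^+(x)} d^+(z) \geq d^+(x) \cdot d$. Second, out-propriety at each $z$ allows at most one out-neighbour $y$ of $z$ to satisfy $c(zy) = c(xz)$, so $\sum_y |B_y| \leq d^+(x)$. Combining, $\sum_y (|M_y| - |B_y|) \geq d^+(x)(d-1)$. Splitting this sum according to whether $|M_y \setminus B_y|$ is above or below the greedy threshold $T := 5m+4$, and bounding each contribution by $T-1$ or by $d^+(x)$ as appropriate, a routine manipulation yields $|\{y : |M_y \setminus B_y| \geq T\}| \geq d - 1 - Tn/d^+(x)$.

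To close, I would split into two regimes. If $d \geq T/\epsilon$, then $Tn/d^+(x) \leq \epsilon n$ and the previous estimate delivers $d^+_{D_m}(x) \geq d - \epsilon n$ (the residual $-1$ being absorbed by the hypothesis $n \geq T/\epsilon^2$). If instead $d < T/\epsilon$, the same hypothesis forces $d - \epsilon n \leq T/\epsilon - \epsilon n \leq 0$, so the inequality is vacuous. The main obstacle I anticipate is the colour-collision event $c(xz) = c(zy)$, which cannot be avoided locally by any properness condition and must be handled globally, via the averaging observation that out-propriety confines each $z \in N^+(x)$ to participate in at most one $B_y$.
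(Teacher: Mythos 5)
The overall strategy (fix $x$, double-count length-two walks to show most $y$ admit many good middle vertices, then convert that abundance into an edge of $D_m$) mirrors the paper's; your counting step is essentially the paper's double-count transposed into different notation, with $M_y\setminus B_y$ playing the role of the paper's $r_v(y)$. The genuine divergence is in the abundance-to-edge step: the paper forms an auxiliary compatibility graph on $5m$ rainbow length-two paths and extracts a size-$m$ clique via a Tur\'an-type bound, whereas you attempt a sequential greedy choice of the middle vertices $z_1,\dots,z_m$.

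That greedy step has a gap, and it is exactly the place Tur\'an's theorem earns its keep. At step $i$ you must choose $z_i$ so that the three new labels $c(z_i)$, $c(xz_i)$, $c(z_iy)$ avoid the colours of the $3(i-1)$ elements $z_j$, $xz_j$, $z_jy$ already placed. Properness and the rainbow vertex set kill three of the nine cross-pairs with each earlier $z_j$ (namely $c(z_i)\ne c(z_j)$, $c(xz_i)\ne c(xz_j)$, $c(z_iy)\ne c(z_jy)$), but six remain, e.g.\ $c(z_i)=c(xz_j)$, $c(z_iy)=c(z_j)$, $c(xz_i)=c(z_jy)$, and so on; each of these, as you correctly observe, rules out at most one candidate. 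That already gives up to $6(i-1)$ excluded candidates at step $i$, plus the two fixed exclusions $c(xz)=c(y)$ and $c(zy)=c(x)$, plus the $i-1$ vertices already used; at step $m$ this is $7m-5$. The claimed ``careful accounting to $5m+3$'' is not supplied, and the natural accounting does not deliver it: a budget of $5m+4$ is exceeded already at $m=5$. A genuinely greedy argument therefore needs a threshold $T$ of order $7m$, and hence a larger $N$ than the one in the statement. Tur\'an's theorem circumvents this: with $T$ rainbow length-two paths, each path clashes with only $O(1)$ others, so the compatibility graph has small independence number and contains a clique of size $T/O(1)$ regardless of any ordering --- the exclusions do not pile up additively over steps. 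To repair your proof you could either raise $T$ (proving the lemma with a worse $N$), or replace the greedy selection by the clique-extraction argument as in the paper.
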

\begin{proof}
Let $v$ be an arbitrary vertex in $D_m$.
It is sufficient to show that $|N^+_{D_m}(v)|\geq |\delta^+(D)| -\epsilon|D|.$

For $w\in V(D)$, let $r_v(w)= \# \text{rainbow paths of length 2 from } v \text{ to } w.$
Let $W=\{w: r_v(w)\geq 5m\}$.

We show that $W$ is contained in $N^+_{D_m}(v)$.
\begin{claim}\label{WinDC}
If $w \in W$, then we have $vw\in E(D_m)$.
\end{claim}
\begin{proof}
From the definition of $W$, we have $5m$ distinct  rainbow paths $P_1, \dots, P_{5m}$ from $v$ to $w$ of length $2$.
Consider an auxiliary graph $G$ with $V(G)=\{P_1, \dots, P_{5m}\}$ and $P_i P_j\in E(G)$ whenever $P_i\cup P_j$ is  rainbow.

We claim that $\delta(G)\geq 5m-4$. Indeed if for $i\neq j$ we have $P_i=vxw$ and $P_j=vyw$, then, using the fact that the colouring on $D$ is proper and the vertex set is rainbow, it is easy to see that  the only way $P_i\cup P_j$ could not be  rainbow is if one of the following holds:
\begin{align*}
c(vx)=c(yw) \hspace{1cm} 
&c(vx)=c(y) \\
c(vy)=c(xw) \hspace{1cm}
&c(vy)=c(x).
\end{align*}
 Thus if $P_i=vxw$ had five non-neighbours $vy_1w, \dots, vy_5w$ in $G$, then by the Pigeonhole Principle for two distinct $j$ and $k$ we would have one of $c(y_jw)=c(y_kw)$, $c(y_j)=c(y_k)$, or $c(vy_j)=c(vy_k)$. But none of these can occur for distinct paths $vy_jw$ and $vy_kw$ since the colouring on $D$ is proper and the vertex set is rainbow. Therefore $\delta(G)\geq 5m-4$ holds.

Now by Tur\'an's Theorem, $G$ has a clique of size at least $|V(G)|/5=m$. The union of the paths in this clique is  rainbow, showing that $vw\in E(D_m)$.
\end{proof}

Now we show that $W$ is large.
\begin{claim}\label{WLarge}
$|W|\geq \delta^+(D)-\epsilon|D|$
\end{claim}
\begin{proof}
For any $u\in N^+_D(v)$ we let $$N'(u)= N^+_D(u)\setminus\{x\in N^+_D(u):ux \text{ or } x \text{ has the same colour as } v \text{ or } vu\}.$$
Since $D$  is properly coloured, and all the vertices in $D$ have different colours, we have that $|\{x \in N^+(u):ux \text{ or } x \text{ has the same colour as } v \text{ or } vu\}|\leq 4$. This implies that $|N'(u)|\geq \delta^+(D)-4$.

Notice that for a vertex $x$, we have $x \in N'(u)$ if, and only if, the path $vux$ is  rainbow. Indeed $vu$ has a different colour from $v$ and $u$ since the colouring is proper. Similarly $ux$ has a different colour from $u$ and $x$. Finally $ux$ and $x$ have different colours from $v$ and $vu$ by the definition of $N'(u)$.

 Therefore there are $\sum_{u\in N^+_D(v)} |N'(u)|$  rainbow paths of length $2$ starting at $v$ i.e. we have $\sum_{x\in V(D)} r_v(x)=\sum_{u\in N^+_D(v)} |N'(u)|$. For any $x\in D$, we certainly have $r_v(x)\leq |N^+(v)|$. If $x \not\in W$ then we have $r_v(x)< 5m$. Combining these we obtain
$$(|D|-|W|)5m+|W||N^+_D(v)|\geq \sum_{x\in V(D)} r_v(x)=\sum_{u\in N^+(v)} |N'(u)|\geq |N^+_D(v)|(\delta^+(D)-4).$$
The last inequality follows from $|N'(u)|\geq \delta^+(D)-4$ for all $u \in N^+_D(v)$.
Rearranging we obtain
$$|W|\geq \frac{|N^+_D(v)|(\delta^+(D)-4)-5m|D|}{|N^+_D(v)|-5m}\geq\delta^+(D)-\frac{5m|D|}{|N^+_D(v)|}-4\geq \delta^+(D)-(5m+4)\frac{|D|}{\delta^+(D)}.$$
If $(5m+4)/\delta^+(D)\leq\epsilon$, then this implies the claim. Otherwise we have $\delta^+(D)< (5m+4)/\epsilon$ which, since $|D|\geq N_0=(5m+4)/\epsilon^2$, implies that $\delta^+(D)\leq \epsilon|D|$ which also implies the claim.
\end{proof}

Claim~\ref{WinDC} shows that $W\subseteq N^+_{D_m}(v)$, and so Claim~\ref{WLarge} implies that $|N^+_{D_m}(v)|\geq \delta^+(D)-\epsilon|D|$. Since $v$ was arbitrary, this implies the lemma.
\end{proof}

The following lemma shows that every directed graph with high minimum degree contains a large, highly connected subset.
\begin{lemma}\label{LargeConnectedSetLemma}
For all $\epsilon>0$ and $k \in \mathbb{N}$, there is a $d=d(\epsilon)=40\epsilon^{-2}$ and $N=N(\epsilon,s)=32k\epsilon^{-2}$ such that the following holds.
Let $D$ be a directed graph of order at least $N$. Then there is a $(k,d)$-connected subset $A\subseteq V(D)$ satisfying 
$$|A|\geq \delta^+(D)-\epsilon|D|.$$
\end{lemma}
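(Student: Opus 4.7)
The plan is to locate $A$ inside a terminal strongly connected component of $D$, using BFS-ball size inequalities together with a midpoint-style argument.

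First reduce to the nontrivial case: if $\delta^+(D) \leq \epsilon|D|$, then $A = \emptyset$ satisfies both the size and the (vacuous) $(k,d)$-connectivity requirements. So assume $\delta^+(D) > \epsilon|D|$. Pick a terminal strongly connected component $T$ of $D$; because $T$ is closed under out-edges we have $\delta^+(D[T]) \geq \delta^+(D)$, and in particular $|T| \geq \delta^+(D)+1$. Inside $D[T]$, set
\begin{align*}
A^+ &= \{\,v \in T : |B^+_{d/2}(v)| \geq (|T|+3k)/2\,\},\\
A^- &= \{\,v \in T : |B^-_{d/2}(v)| \geq (|T|+3k)/2\,\},
\end{align*}
where the balls are computed in $D[T]$, and define $A = A^+ \cap A^-$.

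The size bound $|A| \geq \delta^+(D) - \epsilon|D|$ reduces to controlling the two complements $T \setminus A^\pm$. Since $D[T]$ is strongly connected with minimum out-degree $\geq \delta^+(D) \geq \epsilon|D|$, the sequence $|B^+_s(v)|$ is non-decreasing and strictly increases by at least one whenever $|B^+_s(v)| < |T|$. Using $d/2 = 20\epsilon^{-2}$ steps of this growth and the hypothesis $|D| \geq 32k\epsilon^{-2}$, a counting argument on the BFS level-structure inside $T$ bounds the number of vertices failing the forward-ball condition by $(\epsilon/2)|D|$; symmetry on the reversed digraph gives the same bound for $A^-$, whence $|A| \geq |T| - \epsilon|D| \geq \delta^+(D) - \epsilon|D|$.

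For $(k,d)$-connectivity, given $u,v \in A$ and $S$ with $|S|\leq k-1$, inclusion--exclusion inside $T$ yields $|B^+_{d/2}(u) \cap B^-_{d/2}(v)| \geq 3k$, so at least $3k-(k-1) = 2k+1$ midpoint candidates $w$ lie outside $S$. Concatenating a $u$-to-$w$ forward path of length $\leq d/2$ with a $w$-to-$v$ backward path of length $\leq d/2$ gives a walk of length $\leq d$, which contains a $u$-to-$v$ path of length $\leq d$.

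\textbf{Main obstacle.} Two technical sticking points arise. First, ensuring $|B^+_{d/2}(v)| \geq (|T|+3k)/2$ for most $v$ when $|T|$ is much larger than $\delta^+(D)+d/2$: the naive ``grow by one per step'' bound falls short, and the fix is either to pass to a smaller strongly connected subcomponent of $T$ with controlled diameter, or to exploit a tighter BFS expansion from the density $\delta^+(D[T]) \geq \epsilon|T|$. Second, the concatenated paths may internally visit $S$; since $\delta^+(D[T]-S) \geq \delta^+(D)-(k-1) \geq \epsilon|D|/2$ remains comparable, redoing BFS inside $D[T]-S$ still gives large balls, and the slack of $2k+1$ midpoints allows us to choose a $w$ whose flanking BFS paths avoid $S$ entirely.
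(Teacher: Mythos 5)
The overall architecture (pick a good subgraph, argue BFS balls of constant radius are large, use a midpoint for connectivity) is not unreasonable, but the central step is missing, and it is exactly the step you flagged as an ``obstacle'': you never establish that the balls $B^+_{d/2}(v)$ reach size $(|T|+3k)/2$. High minimum out-degree in a strongly connected digraph does \emph{not} force the forward ball to grow by more than one vertex per step after the first step. Concretely, take $T = A \cup B$ with $|A| = \delta$, make $A$ a complete digraph with each $a_j$ also pointing to one private $b_j$, and chain the $b_i$'s by $b_i \to (A \setminus \{a_1\}) \cup \{b_{i+1}\}$; then $\delta^+(D[T]) = \delta$, $T$ is strongly connected, and from every vertex $|B^+_t| \approx 2\delta + t$. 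With $\delta = 2\epsilon n$ and $|T| = n$ this never reaches $n/2$ within $d/2 = 20\epsilon^{-2}$ steps when $\epsilon$ is small, so your $A^+$ (and hence $A$) is empty, while the lemma demands $|A| \ge \epsilon n$. Passing to a terminal SCC and invoking strong connectivity alone therefore cannot give you the ball-size lower bound; neither of your proposed fixes is developed into an argument, and the first one (passing to a smaller SCC with controlled diameter) cannot preserve the out-degree, since a proper sub-SCC of a terminal SCC necessarily has out-edges leaving it.

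The paper's proof is built precisely to defeat this counterexample. Rather than take an SCC, it iteratively deletes ``bad'' subsets $B_i$ (those into which nothing outside has many out-edges) to obtain a set $\tilde A$ with a genuine expansion property: every $B \subseteq \tilde A$ with $|B| > \epsilon|D|/4$ has a vertex $v \in \tilde A \setminus B$ with $|N^+(v)\cap B| \ge \epsilon^2|D|/16$. This expansion, not the raw out-degree, is what makes the BFS ball in $\tilde A \setminus S$ grow by at least $\epsilon^2|D|/32$ per step even after $S$ is removed (the deletion only costs $|S| \le \epsilon^2|D|/32$ per step, absorbed by the expansion), so radius $d$ suffices. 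The paper also does not need a midpoint intersection: it defines $A$ to be the vertices of $\tilde A$ with large in-neighborhood in $\tilde A$, grows the forward ball from $x$ until it has size $|\tilde A| - \epsilon|D|/4$, and then hits $N^-(y)$ directly. Your second ``obstacle'' -- that concatenated forward/backward paths may internally hit $S$ -- is sidestepped in the paper by running the BFS inside $\tilde A \setminus S$ from the start rather than choosing a midpoint afterward; ``slack of $2k+1$ midpoints'' does not by itself control the \emph{interiors} of the flanking paths.

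To repair your proof you would essentially have to import the paper's pruning step (Claim~\ref{ClaimLargeConnectedSet}) to replace the terminal SCC with an expander-like set; at that point the midpoint device becomes unnecessary.
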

\begin{proof}
We start with the following claim.
\begin{claim}\label{ClaimLargeConnectedSet}
There is a set $\tilde A\subseteq V(D)$ satisfying the following
\begin{itemize}
\item For all $B\subseteq \tilde A$ with $|B|> \epsilon |D|/4$ there is a vertex $v \in \tilde A\setminus B$ such that $|N^+(v)\cap B|\geq \epsilon^2 |D|/16$.
\item $\delta^+(D[\tilde A])\geq \delta^+(D)-\epsilon |D|/4$.
\end{itemize}
\end{claim}
\begin{proof}
Let $A_0= V(D)$.
We define $A_1, A_2, \dots, A_M$ recursively as follows.
\begin{itemize}
\item If $A_i$ contains a set $B_i$ such that $|B_i|> \epsilon|D|/4$ and for all $v \in A_i\setminus B_i$ we have $|N^+(v)\cap B_i|< \epsilon^2 |D|/16$, then we let $A_{i+1}=A_i\setminus B_i$.
\item Otherwise we stop with $M=i$.
\end{itemize}
We will show that that $\tilde A=A_M$ satisfies the conditions of the claim.
Notice that by the construction of $A_M$, it certainly satisfies the first condition.
Thus we just need to show that $\delta^+(D[A_M])\geq \delta^+(D)-\epsilon |D|/4$.

From the definition of $A_{i+1}$ we have that $\delta^+(D[A_{i+1}])\geq \delta^+(D[A_{i}])-\epsilon^2|D|/16$ which implies $\delta^+(D[A_{M}])\geq \delta^+(D)-M\epsilon^2|D|/16$. Therefore it is sufficient to show that we stop with $M\leq 4\epsilon^{-1}$.
This follows from the fact that the sets $B_0, \dots, B_{M-1}$ are all disjoint subsets of $V(D)$ with $|B_i|> \epsilon|D|/4$.
\end{proof}

Let $\tilde A$ be the set given by the above claim.
Let $A=\{v\in \tilde A: |N^-(v)\cap \tilde A|\geq \frac{\epsilon}{2} |D|\}$.
We claim that $A$ satisfies the conditions of the lemma.

To show that $|A|\geq \delta^+(D)-\epsilon|D|$,  notice that we have 
$$\frac{\epsilon}{2}|D|(|\tilde A|-|A|)+|A||\tilde A|\geq \sum_{v\in \tilde A} |N^-(v)\cap \tilde A|= \sum_{v\in \tilde A} |N^+(v)\cap \tilde A|\geq |\tilde A|(\delta^+(D)-\epsilon|D|/4).$$
The first inequality come from bounding $|N^-(v)\cap \tilde A|$ by $\frac{\epsilon}2|D|$ for $v\not\in A$ and by $|\tilde A|$ for $v\in A$.
The second inequality follows from the second property of $\tilde A$ in Claim~\ref{ClaimLargeConnectedSet}.
Rearranging we obtain 
$$|A|\geq \frac{|\tilde A|}{|\tilde A|-\epsilon|D|/2}(\delta^+(D)-3\epsilon|D|/4)\geq \delta^+(D)-\epsilon|D|.$$

Now, we show that $A$ is $(k,d)$-connected in $D$. As in Definition~\ref{DefinitionUncolouredkdCon}, let $S$ be a subset of $V(D)$ with $|S|\leq k-1$ and let $x,y$ be two vertices in $A\setminus S$.
We will find a path of length $\leq d$ from $x$ to $y$ in $\tilde A\setminus S$.
Notice that since $|D|\geq 32k\epsilon^{-2}$, we have $|S|\leq \epsilon^{2} |D|/32$. 

Let $N^t(x)=\{u\in \tilde A\setminus S: d_{D[\tilde A\setminus S]}(x,u)\leq t\}$.
We claim that for all $x\in \tilde A$ and $t\geq 0$ we have
$$|N^{t+1}(x)|\geq \min(|\tilde A|-\epsilon|D|/4, |N^t(x)|+\epsilon^2|D|/32).$$ 
For $t=0$ this holds since we have $|N^1|=|\tilde A|\geq \epsilon |D|/4$.
Indeed if $|N^{t}(x)|< |\tilde A|-\epsilon|D|/4$ holds for some $t$ and $x$, then letting $B=\tilde A\setminus  N^{t}(x)$ we can apply the first property of $\tilde A$ from Claim~\ref{ClaimLargeConnectedSet} in order to find a vertex $u\in N^t(x)$ such that $|N^+(u)\cap (\tilde A\setminus N^t(x))|\geq \epsilon^2 |D|/16$. Using $|S|\leq \epsilon^{2} |D|/32$ we get $|(N^+(u)\setminus S)\cap (\tilde A\setminus N^t(x))|\geq |N^+(u)\cap (\tilde A\setminus N^t(x))|-|S|\geq \epsilon^2 |D|/32$.
Since $(N^+(u)\cap \tilde A\setminus S)\cup N^t(x)\subseteq   N^{t+1}(x)$, we obtain $|N^{t+1}(x)|\geq |N^t(x)|+\epsilon^2|D|/32$.

Thus we obtain that $|N^{t}(x)|\geq \min(|\tilde A|-\epsilon|D|/4, t\epsilon^2|D|/32)$.
Since $(d-1)\epsilon^2/32>1$, we have that $|N^{d-1}(x)|\geq |\tilde A|-\epsilon|D|/4$. 
Recall that from the definition of $A$, we also have also have $|N^-(y)\cap \tilde A|\geq \epsilon |D|/2$. Together these imply that  $N^-(y)\cap N^{d-1}(x)\neq \emptyset$ and hence there is a $x$ -- $y$ path of length $\leq d$ in $\tilde A\setminus S$.
\end{proof}

The following is a generalization of the previous lemma to coloured graphs. This is the main intermediate lemma we need in the proof of Theorem~\ref{MainTheorem}.
\begin{lemma}\label{LargeRainbowConnectedSetLemma}
For all $\epsilon>0$ and $k \in \mathbb{N}$, there is an $d=d(\epsilon)=1280\epsilon^{-2}$ and $N=N(\epsilon,k)=1800k\epsilon^{-4}$ such that the following holds.

Let $D$ be a properly totally coloured directed graph on at least $N$ vertices with rainbow vertex set.
Then there is a $(k,d)$-connected subset $A\subseteq V(D)$ satisfying 
$$|A|\geq \delta^+(D)-\epsilon|D|.$$
\end{lemma}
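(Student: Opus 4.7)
The plan is to apply Lemmas~\ref{DCHighDegree} and~\ref{LargeConnectedSetLemma} to the uncoloured auxiliary digraph $D_m$ from Definition~\ref{DefinitionDm} for a suitably chosen $m$, and then lift the resulting uncoloured $(k,d_1)$-connectivity in $D_m$ back to rainbow $(k,d)$-connectivity in $D$ by expanding each edge of the $D_m$-path into a length-two rainbow path in $D$ chosen greedily.

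Concretely, set $d_1 := 160 \epsilon^{-2}$ and $m := k + 7 d_1$. Applying Lemma~\ref{DCHighDegree} with parameter $\epsilon/2$ gives $\delta^+(D_m) \geq \delta^+(D) - (\epsilon/2)|D|$, and applying Lemma~\ref{LargeConnectedSetLemma} to $D_m$ with parameters $\epsilon/2$ and $k$ produces a set $A \subseteq V(D_m) = V(D)$ that is $(k, d_1)$-connected in $D_m$ in the uncoloured sense of Definition~\ref{DefinitionUncolouredkdCon}, with $|A| \geq \delta^+(D_m) - (\epsilon/2)|D| \geq \delta^+(D) - \epsilon |D|$. One checks that $N(\epsilon, k) = 1800 k \epsilon^{-4}$ exceeds the size requirements $(5m+4)/(\epsilon/2)^2$ and $32k/(\epsilon/2)^2$ imposed by the two lemmas.

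It remains to verify that $A$ is $(k, d)$-connected in $D$ in the coloured sense of Definition~\ref{DefinitionColouredkdCon}. Given a set $S$ of at most $k-1$ colours and vertices $x, y \in A$, let $T := \{v \in V(D) : c(v) \in S\}$; since $D$ has rainbow vertex set, $|T| \leq k-1$ and so $|T \setminus \{x,y\}| \leq k-1$. Apply the uncoloured $(k, d_1)$-connectedness of $A$ in $D_m$ with forbidden vertex set $T \setminus \{x,y\}$ to obtain a $D_m$-path $P = v_0 v_1 \cdots v_\ell$ from $x$ to $y$ with $\ell \leq d_1$ whose internal vertices lie outside $T$. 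Expand each edge $v_i v_{i+1}$ of $P$ into a length-two $D$-path $v_i z_i v_{i+1}$, where $z_i$ is chosen from the $m$ witnessing intermediate vertices $z_{i,1}, \ldots, z_{i,m}$ provided by the definition of $D_m$. Pick these greedily, for $i = 0, 1, \ldots, \ell - 1$, so that (a) none of $c(z_i), c(v_i z_i), c(z_i v_{i+1})$ lies in $S$, (b) none of these three colours has appeared earlier in the path under construction, and (c) $z_i$ is not already a vertex of the path. Since the $m$ witnessing paths are internally disjoint and jointly rainbow, the $m$ intermediate vertices and the $2m$ intermediate edges have pairwise distinct colours, so each of the constraints (a)--(c) excludes at most one of the $m$ options. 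A direct count bounds the total number of excluded options at step $i$ by at most $(k-1) + (3i + d_1 + 1) + (i + d_1 - 1) \leq k + 6 d_1 < m$, so a valid $z_i$ always exists. The resulting path from $x$ to $y$ has length $2\ell \leq 2 d_1 \leq d$, is rainbow, and internally avoids $S$.

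The main obstacle is the greedy expansion step and the accompanying bookkeeping. What makes it go through is the strong rainbowness built into Definition~\ref{DefinitionDm}: the $m$ witnessing paths per $D_m$-edge are jointly rainbow and internally disjoint, so each forbidden colour or vertex eliminates at most one option, which would fail badly if one merely had $m$ arbitrary length-two paths. The argument also crucially uses the rainbow vertex set of $D$, which lets the colour-avoidance requirement $S$ on the $D$-path be translated into a vertex-avoidance requirement $T$ of size at most $k-1$ in $D_m$, so the uncoloured connectivity of $A$ in $D_m$ can be applied off-the-shelf.
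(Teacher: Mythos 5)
Your proof is correct and takes essentially the same route as the paper: reduce to the uncoloured auxiliary digraph $D_m$ via Lemmas~\ref{DCHighDegree} and~\ref{LargeConnectedSetLemma}, translate the forbidden colour set $S$ into a forbidden vertex set of size at most $k-1$ using the rainbow vertex set, and then lift the resulting short $D_m$-path back to a rainbow $D$-path by greedily selecting, for each $D_m$-edge, one of the $m$ jointly rainbow length-two witnesses, with the rainbowness of the union ensuring each forbidden colour or vertex rules out at most one option. The only cosmetic differences are in the choice of $m$ (you take $m=k+7d_1$ with a tighter count; the paper takes $m=9d+3k$ with more slack and assigns all $m$ colour-triples at once rather than one vertex at a time) and that your stated $N=1800k\epsilon^{-4}$ only covers the size requirement $(5m+4)(\epsilon/2)^{-2}$ once $k$ is moderately large, a constant-level looseness that the paper's own $N$ shares and that neither proof treats as a serious concern.
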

\begin{proof}
Set $m=9d+3k$, and consider the directed graph $D_m$ as in Definition~\ref{DefinitionDm}.
Using $|D|\geq 1800k\epsilon^{-4}$, we can apply Lemma~\ref{DCHighDegree} with the constant $\epsilon/4$ we have that $\delta^+(D_m)\geq \delta^+(D) -\epsilon|D|/4.$

Apply Lemma~\ref{LargeConnectedSetLemma} to $D_m$ with the constants $\epsilon/4$, and $k$. This gives us a $(k, d/2)$-connected set $A$ in $D_m$ with $|A|\geq \delta^+(D_m)-\epsilon|D_m|/4\geq \delta^+(D)-\epsilon|D|/2$.
We claim that $A$ is $(k, d)$-connected  in $D$. As in Definition~\ref{DefinitionColouredkdCon}, let $S$ be a set of $k$ colours and $x,y \in A$. Let $S_V$ be the vertices of $D$ with colours from $S$. Since all vertices in $D$ have different colours, we have $|S_V|\leq k$.
Since $A$ is $(k, d/2)$-connected in $D_m$, there is a $x$ -- $y$ path $P$ in $(D_m\setminus S_V)+x+y$ of length $\leq d/2$.

Using the property of $D_m$, for each edge $uv\in P$, there are at least $m$ choices for a triple of three distinct colours $(c_1, c_2, c_3)$ and a vertex $y(uv)$ such that there is a path $uy(uv)v$ with $c(uy(uv))=c_1$, $c(y(uv))=c_2$, and $c(y(uv)v)=c_3$. Since $m\geq 9d+3k\geq 6|E(P)|+3|V(P)|+3|S|$, we can choose such a triple for every edge $uv\in P$ such that for two distinct edges in $P$, the triples assigned to them are disjoint, and also distinct from the colours in $S$ and colours of vertices of $P$.

Let the vertex sequence of $P$ be $u,x_1, x_2, \dots, x_{p}, v$. The following sequence of vertices is a rainbow path from $u$ to $v$ of length $2|P|\leq d$ internally avoiding colours in $S$
$$P'=u, y(ux_1), x_1,  y(x_1x_2), x_2, y(x_2x_3), x_3,\dots, x_{p-1},y(x_{p-1}x_p), x_p, y(x_pv), v.$$
To show that $P'$ is a rainbow path we must show that all its vertices and edges have different colours. The vertices all have different colours since the vertices in $D$ all had different colours. The edges of $P'$ all have different colours from each other and the vertices of $P'$ by our choice of the vertices $y(x_ix_{i+1})$ and the triples of colours associated with them.
\end{proof}

We'll need the following simple lemma which says that for any vertex $v$ there is a set of vertices $N^{t_0}$ close to $v$ with few edges going outside $N^{t_0}$.
\begin{lemma}\label{CloseSubgraphsLowExpansion}
Suppose we have $\epsilon>0$ and $D$ a totally coloured directed graph. 
Let $v$ be a vertex in $D$ and for $t\in \mathbb{N}$, let $N^t(v)=\{x:\dRainbow{v,x}\leq t\}$.
There is a $t_0\leq \epsilon^{-1}$ such that we have 
$$|N^{t_{0}+1}(v)|\leq |N^{t_{0}}(v)|+ \epsilon |D|.$$
\end{lemma}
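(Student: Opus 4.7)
The plan is a short pigeonhole (telescoping) argument on the chain of nested sets $N^{0}(v)\subseteq N^{1}(v)\subseteq \cdots$. The key observations are that any rainbow $v$-$x$ path of length at most $t$ is also one of length at most $t+1$, so $N^{t}(v)\subseteq N^{t+1}(v)$, and that every $N^{t}(v)$ is a subset of $V(D)$, so $|N^{t}(v)|\leq |D|$. In particular the increments $|N^{t+1}(v)|-|N^{t}(v)|$ are non-negative, and for every $T$ they telescope as
\[
\sum_{t=0}^{T-1}\bigl(|N^{t+1}(v)|-|N^{t}(v)|\bigr)\;=\;|N^{T}(v)|-|N^{0}(v)|\;\leq\;|D|.
\]

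The main step is then a proof by contradiction. Suppose the lemma fails, i.e.\ that for every $t\in\{0,1,\ldots,\lfloor\epsilon^{-1}\rfloor\}$ we have $|N^{t+1}(v)|>|N^{t}(v)|+\epsilon|D|$. Summing these strict inequalities over that range of $t$ and using the telescoping identity above yields
\[
|D|\;\geq\;|N^{\lfloor\epsilon^{-1}\rfloor+1}(v)|-|N^{0}(v)|\;>\;(\lfloor\epsilon^{-1}\rfloor+1)\,\epsilon|D|\;\geq\;|D|,
\]
a contradiction. Hence there exists some $t_0\in\{0,1,\ldots,\lfloor\epsilon^{-1}\rfloor\}$ with $|N^{t_0+1}(v)|\leq|N^{t_0}(v)|+\epsilon|D|$, and this $t_0$ satisfies $t_0\leq \epsilon^{-1}$, as required.

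There is essentially no serious obstacle to overcome: this is a textbook averaging argument, and the only mildly delicate point is to choose the index range for $t$ so that strictly more than $\epsilon^{-1}$ increments are being compared against the bound $|D|$, which is why I take the range $\{0,\dots,\lfloor\epsilon^{-1}\rfloor\}$ of length $\lfloor\epsilon^{-1}\rfloor+1$ above.
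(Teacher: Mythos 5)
Your proof is correct and takes essentially the same pigeonhole approach as the paper: the paper unrolls the assumed strict growth to get $|N^{t}(v)|>\epsilon t|D|$ directly, while you phrase it as a telescoping sum, but the underlying argument is identical.
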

\begin{proof}
Notice that if $|N^{t+1}(v)|> |N^{t}(v)|+ \epsilon |D|$ held for all $t\leq \epsilon^{-1}$, then we would have $|N^{t}(v)|>\epsilon t|D|$ for all $t\leq \epsilon^{-1}$. When $t=\epsilon^{-1}$ this gives $|N^{\epsilon^{-1}}(v)|>|D|$, which is a contradiction.
\end{proof}

A corollary of the above lemma is that for any vertex $v$ in a properly coloured directed graph, there is  a subgraph of $D$ close to  $v$ which has reasonably large minimum out-degree. 
\begin{lemma}\label{CloseHighDegreeSubgraph}
Suppose we have $\epsilon>0$ and $D$ a properly totally coloured directed graph on at least $2\epsilon^{-2}$ vertices, with rainbow vertex set. 
Let $v$ be a vertex in $D$ and let $\delta^+= \min_{x
: \dRainbow{v,x}\leq \epsilon^{-1}} d^+(x)$. Then there is a set $N$ such that $\dRainbow{v,N}\leq \epsilon^{-1}$ and we have
$$\delta^+(D[N])\geq \delta^+ - 2\epsilon |D|.$$
\end{lemma}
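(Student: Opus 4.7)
The plan is to take $N$ to be the closed rainbow ball around $v$ whose radius is furnished by Lemma~\ref{CloseSubgraphsLowExpansion}. Apply that lemma to $v$ in $D$ with parameter $\epsilon$ to get $t_{0} \leq \epsilon^{-1}$ satisfying $|N^{t_{0}+1}(v)| \leq |N^{t_{0}}(v)| + \epsilon|D|$, and set $N := N^{t_{0}}(v)$. Every $x \in N$ has $\dRainbow{v,x} \leq t_{0} \leq \epsilon^{-1}$, so by the definition of $\delta^{+}$ we have $d^{+}(x) \geq \delta^{+}$. It therefore suffices to show that, for each $x \in N$, at most $2\epsilon|D|$ of the out-neighbours of $x$ lie outside $N$.

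Fix $x \in N$ and a shortest rainbow $v \to x$ path $P$; then $P$ has at most $t_{0}$ edges and $t_{0}+1$ vertices, so it uses at most $2t_{0}+1$ distinct colours. For an out-neighbour $y \in N^{+}(x) \setminus N$, first observe that $y \notin V(P)$, since otherwise the sub-path of $P$ from $v$ to $y$ would witness $\dRainbow{v,y} \leq t_{0}$ and place $y$ in $N$. Consequently $P + xy$ is a walk of length at most $t_{0}+1$, and it is a rainbow $v \to y$ path unless $c(xy)$ or $c(y)$ coincides with some colour on $P$. Whenever the extension is rainbow we have $y \in N^{t_{0}+1}(v) \setminus N$, and by the choice of $t_{0}$ there are at most $\epsilon|D|$ such $y$.

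When the extension fails, the bad $y$ split into two types. Out-properness at $x$ ensures that at most $2t_{0}+1$ of the edges $xy$ can share a colour with $P$, accounting for at most $2t_{0}+1$ bad out-neighbours. For $c(y)$ to match a vertex colour of $P$ we would need $y$ to equal some vertex of $P$ (since the vertex set of $D$ is rainbow), which we have excluded; so the only remaining possibility is that $c(y)$ matches one of the at most $t_{0}$ edge colours of $P$, and since vertex colours are distinct this yields at most $t_{0}$ further bad $y$. Combining,
\[
|N^{+}(x) \cap N| \geq d^{+}(x) - \epsilon|D| - (3t_{0}+1) \geq \delta^{+} - \epsilon|D| - (3\epsilon^{-1} + 1) \geq \delta^{+} - 2\epsilon|D|,
\]
where the final step uses the hypothesis $|D| \geq 2\epsilon^{-2}$ to absorb the $O(\epsilon^{-1})$ additive term into the $\epsilon|D|$ slack. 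Since $x \in N$ was arbitrary, this gives $\delta^{+}(D[N]) \geq \delta^{+} - 2\epsilon|D|$, as required.

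The main obstacle (really the only step that needs care) is the bookkeeping in the third paragraph: both structural hypotheses on the colouring—out-properness at $x$ and a rainbow vertex set—have to be fully exploited in order to keep the count of "bad" extensions at $O(\epsilon^{-1})$, small enough that the $|D| \geq 2\epsilon^{-2}$ hypothesis can swallow it into the $\epsilon|D|$ error term, with the expansion bound from Lemma~\ref{CloseSubgraphsLowExpansion} handling the rest.
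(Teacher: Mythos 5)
Your proof takes exactly the route the paper does: apply Lemma~\ref{CloseSubgraphsLowExpansion} to obtain $t_{0}\le\epsilon^{-1}$, set $N=N^{t_{0}}(v)$, observe that $d^{+}(x)\ge\delta^{+}$ for $x\in N$, and then argue that out-neighbours of $x$ lying outside $N$ either produce a rainbow extension $P+xy$ (landing $y$ in $N^{t_{0}+1}(v)\setminus N$, of which there are at most $\epsilon|D|$ by the choice of $t_{0}$) or are ``bad'' for one of finitely many colour-clash reasons. The decomposition and the use of both hypotheses (proper total colouring and rainbow vertex set) are the same as in the paper, which instead frames the conclusion as a contradiction to the choice of $t_{0}$ rather than as a direct lower bound on $|N^{+}(x)\cap N|$; that presentational difference is immaterial.

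There is, however, a genuine arithmetic gap in your last display. You count at most $3t_{0}+1$ bad out-neighbours, and then claim
$\delta^{+}-\epsilon|D|-(3\epsilon^{-1}+1)\ge\delta^{+}-2\epsilon|D|$,
i.e.\ $\epsilon|D|\ge 3\epsilon^{-1}+1$, i.e.\ $|D|\ge 3\epsilon^{-2}+\epsilon^{-1}$. The hypothesis gives only $|D|\ge 2\epsilon^{-2}$, so this does not follow. The paper's version of the count is $2|P|\le 2\epsilon^{-1}$, which is exactly absorbed by $|D|\ge 2\epsilon^{-2}$; with your $3t_{0}+1$ the absorption fails. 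You can shave your edge-clash count from $2t_{0}+1$ to $2t_{0}$ (since $c(xy)\neq c(x)$ by properness of the total colouring, so $c(x)$ is never a conflicting colour), giving $3t_{0}$, but that still needs $|D|\ge 3\epsilon^{-2}$. I do not see how to push the bad count below $3t_{0}$: after excluding $y\in V(P)$, the $c(xy)$-clashes contribute up to $2t_{0}$ (out-properness, one per remaining colour of $P$) and the $c(y)$-vs-edge-colour clashes contribute up to a further $t_{0}$ (rainbow vertex set, one per edge colour of $P$), and these two families of bad $y$ need not overlap. So as written the final inequality does not close; you should either record that a marginally stronger hypothesis $|D|\ge 3\epsilon^{-2}$ (say) is needed, or note that the lemma is only ever invoked with $|D|$ far above $2\epsilon^{-2}$, so the difference is harmless downstream.
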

\begin{proof}
Apply Lemma~\ref{CloseSubgraphsLowExpansion} to $D$ in order to obtain a number $t_0\leq \epsilon^{-1}$ such that $|N^{t_{0}+1}(v)|\leq |N^{t_{0}}(v)|+ \epsilon |D|$.  We claim that the set $N=N^{t_{0}}(v)$ satisfies the conditions of the lemma. 

Suppose, for the sake of contradiction that there is a vertex $x\in N^{t_0}(v)$ with $|N^+(x)\cap N^{t_0}(v)|< \delta^+ - 2\epsilon |D|$. Since $\delta^+\leq |N^+(x)|$, we have $|N^+(x)\setminus N^{t_0}(v)|> 2\epsilon |D|$. Let $P$ be a length $\leq t_0$ path from $v$ to $x$. Notice that since the colouring on $D$ is proper and all vertices in $D$ have different colours, the path $P+y$ is rainbow for all except at most $2|P|$ of the vertices $y \in N^+(x)$. Therefore we have $|N^+(x)\setminus  N^{t_0+1}(v)|\leq 2|P|\leq 2\epsilon^{-1}$. Combining this with $|D|\geq 2\epsilon^{-2}$, this implies 
\begin{align*}
|N^{t_0+1}(v)|&\geq |N^{t_0}(v)|+|N^+(x)\setminus N^{t_0}(v)|-|N^+(x)\setminus N^{t_0+1}(v)|\\
&>|N^{t_0}(v)|+2\epsilon |D|-2\epsilon^{-1}\\
&\geq |N^{t_0}(v)|+\epsilon|D|.
\end{align*}
This contradicts the choice of $t_0$ in Lemma~\ref{CloseSubgraphsLowExpansion}.
\end{proof}

\section{Proof of Theorem~\ref{MainTheorem}}\label{SectionMainTheorem}
The goal of this section is to prove an approximate version of Conjecture~\ref{ConjectureAharoni} in the case when all the matchings in $G$ are disjoint. The proof will involve considering auxiliary directed graphs to which Lemmas~\ref{LargeRainbowConnectedSetLemma} and~\ref{CloseHighDegreeSubgraph} will be applied.

We begin this section by proving a series of lemmas (Lemmas~\ref{SwitchingLemma} --~\ref{IncrementLemma}) about bipartite graphs consisting of a union of $n_0$ matchings. The set-up for these lemmas will always be the same, and so we state it in the next paragraph to avoid rewriting it in the statement of every lemma.

We will always have bipartite graph called ``$G$'' with bipartition classes $X$ and $Y$ consisting of $n+1$ edge-disjoint matchings $M_1, \dots, M_{n+1}$. These matchings will be referred to as colours, and the colour of an edge $e$ means the matching $e$ belongs to.
There will always be a rainbow matching called $M$ of size $n$ in $G$. 
We set $X_0=X\setminus V(M)$ and $Y_0-Y\setminus V(M)$. The colour missing from $M$ will denoted by $c^*$.

Notice that for any edge $e$, there is a special colour  (the colour $c_e$ of the edge $e$) as well as a special vertex in $X$ (i.e. $e\cap X$) and in $Y$ (i.e. $e\cap Y$). In what follows we will often want to refer to the edge $e$, the colour $c_e$, and the vertices $e\cap X$ and $e\cap Y$ interchangeably. 
To this end we make a number of useful definitions:
\begin{itemize}
\item For an edge $e$, we let $(e)_C$ be the colour of $e$, $(e)_X=e\cap X$, and $(e)_Y=e\cap Y$.
\item For a vertex $x \in X$, we let $(x)_M$ be the edge of $M$ passing through $x$ (if it exists), $(x)_C$ the colour of $(x)_M$, and $(x)_Y$ the vertex $(x)_M\cap Y$. If there is no edge of $M$ passing through $x$, then $(x)_M$, $(x)_C$, and $(x)_Y$ are left undefined.
\item For a vertex $y \in Y$, we let $(y)_M$ be the edge of $M$ passing through $y$ (if it exists), $(y)_C$ the colour of $(y)_M$, and $(y)_X$ the vertex $(y)_M\cap X$. If there is no edge of $M$ passing through $y$, then $(y)_M$, $(y)_C$, and $(y)_X$ are left undefined.
\item For a colour $c$, we let $(c)_M$ be the colour $c$ edge of $M$ (if it exists), $(x)_X=(c)_M\cap X$, and $(c)_Y=(c)_M\cap Y$. For the colour $c^*$, we leave $(c)_M$, $(c)_X$, and $(c)_Y$ undefined.
\end{itemize}
For a set $S$ of colours, edges of $M$, or vertices, we let $(S)_M=\{(s)_M:s\in S\}$, $(S)_X=\{(s)_X:s\in S\}$, $(S)_Y=\{(s)_Y:s\in S\}$, and $(S)_C=\{(s)_C:s\in S\}$. Here $S$ is allowed to contain colours/edges/vertices for which $(*)_M$/$(*)_X$/$(*)_Y$/$(*)_C$ are undefined---in this case $(S)_M$ is just the set of $(s)_M$ for $s\in S$ where $(s)_M$ is defined (and similarly for $(S)_X$/$(S)_Y$/$(S)_C$. 
It is useful to observe that from the above definitions we get identities such as $(((S)_X)_C)_M=S$ for a set $S$ of edges of $M$.

We will now introduce two important and slightly complicated definitions. Both Definition~\ref{DefinitionSwitching} and~\ref{DefinitionFree} will take place in the setting of a bipartite graph $G$ with bipartition $X\cup Y$ consisting of $n+1$ edge-disjoint matchings, and a rainbow matching $M$ of size $n$ missing colour $c^*$.
The first definition is that of a \emph{switching}---informally this should be thought of as a sequence of edges of $G\setminus M$ which might be exchanged with a sequence of edges of $M$ in order to produce a new rainbow matching of size $n$. See Figure~\ref{SwitchingFigure} for an illustration of a switching.

\begin{figure}
  \centering
     \includegraphics[width=0.6\textwidth]{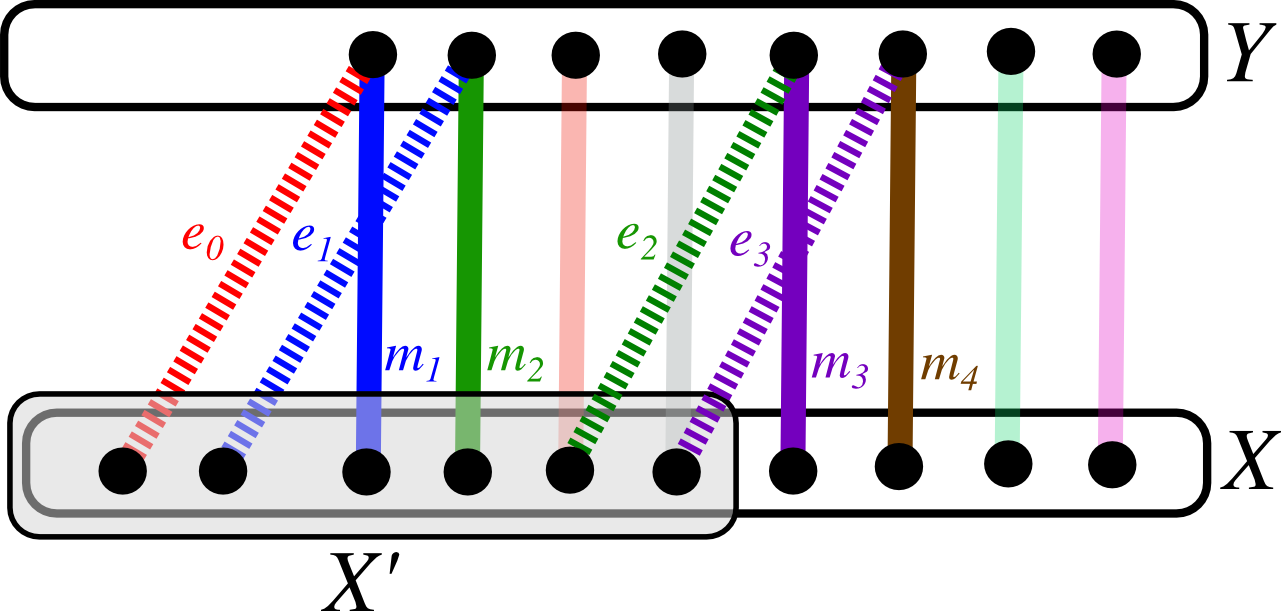}
  \caption{An $X'$-switching of length $4$. The solid lines represent edges of $M$ and the dashed lines represent edges not in $M$. \label{SwitchingFigure}}
\end{figure}
\begin{definition}\label{DefinitionSwitching}
Let $X'\subseteq X$. We call a sequence of edges, $\sigma=(e_0, m_1, e_1, m_2, e_2, \dots, e_{\ell-1}, m_{\ell})$, an $X'$-switching if the following hold.
\begin{enumerate}[(i)]
\item For all $i$, $m_i$ is an edge of $M$ and $e_i$ is not an edge of $M$.
\item For all $i$, $m_i$ and $e_i$ have the same colour, $c_i$.
\item For all $i$, $e_{i-1}\cap m_i=(m_i)_Y$.
\item For all $i\neq j$, we have $e_{i}\cap e_j=e_{i-1}\cap m_{j}=\emptyset$ and also $c_i\neq c_j$.
\item For all $i$, $(e_i)_X\in X'$.
\end{enumerate}
\end{definition}
If $\sigma$ is a switching defined as above, then we say that $\sigma$ is a length $\ell$ switching from $c_0$ to $c_{\ell}$.
Let $e(\sigma)=\{e_0, \dots, e_{\ell-1}\}$ and $m(\sigma)=\{m_1, \dots, m_{\ell}\}$. For a switching $\sigma$ we define $(\sigma)_X=(e(\sigma))_X\cup (m(\sigma))_X$.

The next definition is that of a \emph{free} subset of $X$---informally a subset $X'\subset X$ is free if there are matchings $M'$ which ``look like'' $M$, but avoid $X'$.
\begin{definition}\label{DefinitionFree}
Let $X', T\subseteq X$, $k\in\mathbb{N}$, and $c$ be a colour.
We say that $X'$ is $(k,T,c)$-free if $T\cap X'=\emptyset$, $c \not\in (X'\cup T)_C$, and the following holds:

Let $A$ be any set of $k$ edges in $M\setminus ((T)_M\cup (c)_M)$, $B\subseteq X'$ any set of $k$ vertices such that $(A)_X\cap B=\emptyset$. Then there is a rainbow matching $M'$ of size $n$ satisfying the following:
\begin{itemize}
\item $M'$ agrees with $M$ on  $A$.
\item $M'\cap B=\emptyset$.
\item $M'$ misses the colour $c$.
\end{itemize}
\end{definition}
It is worth noticing that $X_0$ is $(n,\emptyset,c^*)$-free (always taking the matching $M'$ to be $M$ in the definition). Intuitively free sets should be thought of as sets which ``behave like $X_0$'' for the purposes of finding a matching larger than $M$.

The following lemma is crucial---it combines the preceding two definitions together and says that if we have an $X'$-switching $\sigma$ for a free set $X'$, then there is a new rainbow matching of size $n$ which avoids $(m(\sigma))_X$.
\begin{lemma}\label{SwitchingLemma}
Suppose that $X'$ is $(2k,T,c)$-free and $\sigma=(e_0, m_1, e_1, \dots, e_{\ell-1}, m_{\ell})$ is an $X'$-switching from $c$ to $(m_{\ell})_C$ of length $\ell\leq k$. 
 Let $A$ be any set of at most $k$ edges in $M-(c)_M$ and let $B$ be any subset of $X'$ of order at most $k$. Suppose that the following disjointness conditions hold
  \begin{align*}
\hspace{2cm} &(\sigma)_X\cap T=\emptyset &(\sigma)_X\cap (A)_X=\emptyset \hspace{1cm} &(\sigma)_X\cap B=\emptyset \hspace{2cm}\\
\hspace{2cm} & &T\cap(A)_X =\emptyset \hspace{1cm}  &(A)_X\cap B=\emptyset.\hspace{2cm}
 \end{align*}
Then there is a rainbow matching $\tilde M$ of size $n$ in $G$ which misses colour $(m_{\ell})_C$, agrees with $M$ on $A$, and has $\tilde M\cap (m(\sigma))_X=\tilde M\cap  B=\emptyset$.
\end{lemma}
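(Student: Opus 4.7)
The plan is to use the freeness of $X'$ to find an auxiliary rainbow matching $M'$ of size $n$ that contains $m(\sigma)$ but avoids $(e(\sigma))_X$, and then to modify $M'$ by exchanging $m(\sigma)$ for $e(\sigma)$ to obtain the desired $\tilde M$. The switching axioms are engineered precisely so that this exchange preserves the matching and rainbow properties: for $1\leq i\leq \ell-1$, the pair $(m_i,e_i)$ shares a colour $c_i$ by (ii), so removing $m_i$ and inserting $e_i$ leaves the colour count unchanged; the edge $e_0$ then contributes the new colour $c_0=c$ that $M'$ was missing, and $m_\ell$ takes away the old colour $c_\ell=(m_\ell)_C$, exactly as required.

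Concretely, I would apply $(2k,T,c)$-freeness to the sets
$$A':=A\cup m(\sigma)\qquad\text{and}\qquad B':=B\cup (e(\sigma))_X,$$
padding up to size exactly $2k$ with auxiliary elements if the definition of free requires equality rather than inequality. The hypotheses needed to apply freeness can be read off from the stated disjointness conditions together with switching axioms (iv) and (v): the inclusion $A'\subseteq M\setminus((T)_M\cup(c)_M)$ uses $(\sigma)_X\cap T=\emptyset$, $T\cap(A)_X=\emptyset$, and the fact that every colour $c_i$ appearing in $\sigma$ is distinct from $c=c_0$ by (iv); the inclusion $B'\subseteq X'$ uses (v); and $(A')_X\cap B'=\emptyset$ follows by pairing off the hypothesised disjointnesses and using (iii)--(iv) to get $(e_k)_X\neq (m_j)_X$ for all $k,j$. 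Freeness then yields a rainbow matching $M'$ of size $n$ agreeing with $M$ on $A'$ (so in particular $m(\sigma)\subseteq M'$), disjoint from $B'$, and missing colour $c$.

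I would then define $\tilde M:=(M'\setminus m(\sigma))\cup e(\sigma)$ and verify each required property in turn. The size is $n$ because $(e(\sigma))_X\subseteq B'$ is disjoint from $M'$, so the $\ell$ insertions are genuinely new and balance the $\ell$ deletions. For the matching property, a new edge $e_i$ can only clash with $M'\setminus m(\sigma)$ at $(e_i)_Y=(m_{i+1})_Y$ by (iii), and this vertex was vacated precisely when $m_{i+1}$ was removed. The rainbow property follows from the bookkeeping above, with (iv) ensuring that all the $c_i$'s are distinct. Finally, $A\subseteq \tilde M$ holds because $A\subseteq A'\subseteq M'$ and $A\cap m(\sigma)=\emptyset$, while the disjointnesses $\tilde M\cap B=\emptyset$ and $\tilde M\cap (m(\sigma))_X=\emptyset$ reduce to the hypothesised disjointness conditions together with (iv).

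The only real obstacle is pure bookkeeping: making sure each disjointness condition in the hypothesis is invoked at the right place, that every colour appearing in $\sigma$ is correctly tracked as distinct from $c$ (which is needed to place $m(\sigma)$ inside $M\setminus((T)_M\cup(c)_M)$), and that the colour gained and lost in the swap match up as $c$ in and $(m_\ell)_C$ out. Once these accountings are laid out carefully, the verification is entirely mechanical.
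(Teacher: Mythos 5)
Your proposal follows essentially the same argument as the paper: you form $A'=A\cup m(\sigma)$ and $B'=B\cup(e(\sigma))_X$, check that the disjointness conditions together with the switching axioms place $(A',B')$ within the scope of $(2k,T,c)$-freeness, obtain $M'$, and then set $\tilde M=(M'\setminus m(\sigma))\cup e(\sigma)$, verifying the matching, rainbow, and containment/avoidance properties exactly as in the paper. The only cosmetic difference is your remark about padding to size exactly $2k$, which the paper silently glosses over by reading the definition as ``at most $k$''.
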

\begin{proof}
We let $A'=m(\sigma)\cup A$ and $B'=(e(\sigma))_X\cup B$.
Notice that we have $|A'|, |B'|\leq 2k$. Also from the definition of ``switching'', we have that for any $i$ and $j$, the edges $e_i$ and $m_j$ never intersect in $X$ which together with $(A)_X\cap (\sigma)_X=\emptyset$, $B\cap (\sigma)_X=\emptyset$, and $(A)_X\cap B=\emptyset$ implies that $(A')_X\cap B'=\emptyset$.
Also, $(\sigma)_X\cap T=\emptyset$ and $(A)_X\cap T=\emptyset$ imply that  $A'\cap (T)_M=\emptyset$, and hence since $\sigma$ is a switching starting at $c$ we have  $A'\subseteq M\setminus ((T)_M\cup (c)_M)$.  Finally, $\sigma$ being an $X'$-switching and $B\subseteq X'$ imply that $B'\subseteq X'$.

Therefore we can invoke the definition $X'$ being $(2k,T,c)$-free in order to obtain a rainbow matching $M'$ of size $n$ avoiding $B'$, agreeing with $M$ on $A'$, and missing colour $c=(e_{0})_C$.
We let $$\tilde M=(M'\setminus m(\sigma))\cup e(\sigma) = M' +e_0- m_1+ e_1- m_2+ e_2 - \dots+ e_{\ell-1}- m_{\ell}.$$

We claim that $\tilde M$ is a matching which satisfies all the conditions of the lemma.

Recall that $B'\supseteq(e(\sigma))_X$, $A'\supseteq m(\sigma)$, and $(A')_X\cap B'=\emptyset$. Since $M'$ agreed with $M$ on $A'$ and was disjoint from $B'$, 
we get $m(\sigma)\subseteq M'$ and $e(\sigma)\cap (M'\setminus m(\sigma))=\emptyset$.
This implies that $\tilde M$ is a set of $n$ edges and also that $(\tilde M)_X=\big((M')_X\setminus (m(\sigma))_X\big)\cup (e(\sigma))_X$ is a set of $n$ vertices. Finally notice that since $(e_i)_Y=(m_{i+1})_Y$ we have $(\tilde M)_Y=(M')_Y$. Thus $\tilde M$ is a set of $n$ edges with $n$ vertices in each of $X$ and $Y$ i.e. a matching. The matching $\tilde M$ is clearly rainbow, missing the colour $(m_{\ell})_C$ since $m_i$ and $e_i$ always have the same colour.

To see that $\tilde M$ agrees with $M$ on edges in $A$, notice that $M'$ agreed with $M$ on these edges since we had $A\subseteq A'$. Since $(\sigma)_X\cap (A)_X=\emptyset$ implies that $\sigma$ contains no edges of $A$,  we obtain that $\tilde M$ agrees with $M$ on $A$.

To see that $\tilde M\cap (m(\sigma))_X=\emptyset$, recall that $(\tilde M)_X=\big((M')_X\setminus (m(\sigma))_X\big)\cup (e(\sigma))_X$ and $(m(\sigma))_X\cap (e(\sigma))_X=\emptyset$.
Finally, $\tilde M\cap B=\emptyset$ follows from $M'\cap B=\emptyset$, $(\tilde M)_X=\big((M')_X\setminus (m(\sigma))_X\big)\cup (e(\sigma))_X$, and $B\cap (\sigma)_X=\emptyset$.
\end{proof}

We study $X'$-switchings by looking at an auxiliary directed graph.
For any $X'\subseteq X$, we will define a directed, totally labelled graph $D_{X'}$. We call $D_{X'}$ a ``labelled'' graph rather than a ``coloured'' graph just to avoid confusion with the coloured graph $G$. Of course the concepts of ``coloured'' and ``labelled'' graphs are equivalent, and we will freely apply results from Section~\ref{SectionConnectedness} to labelled graphs. 
The vertices and edges of $D_{X'}$ will be labelled by elements of the set $X\cup \{*\}$.  
\begin{definition}
Let $X'$ be a subset of $X$. The directed graph $D_{X'}$ is defined as follows:
\begin{itemize}
\item The vertex set of $D_{X'}$ is the set of colours of edges in $G$. For any colour $v\in V(D_{X'})$ present in $M$, $v$ is labelled by ``$(v)_X$''. The colour $c^*$ is labelled by ``$*$''.
\item For two colours $u$ and $v\in V(D_{X'})$, there is a directed edge from $u$ to $v$ in $D_{X'}$ whenever there is an $x\in X'$ such that there is a colour $u$ edge from $x$ to the the vertex $(v)_Y$ in $G$. In this case $uv$ is labelled by ``$x$''.
\end{itemize}
\end{definition}
Notice that in the second part of this definition the labelling is well-defined since there cannot be colour $u$ edges from two distinct vertices $x$ and $x'$ to $(v)_Y$ (since the colour $u$ edges form a matching in $G$).

Recall that a total labelling is proper if outgoing edges at a vertex always have different labels, ingoing edges at a vertex always have different labels, adjacent vertices have different labels, and an edge always has different labels from its endpoints.
Using the fact that the matchings in $G$ are disjoint we can show that $D_{X'}$ is always properly labelled.
\begin{lemma}\label{ProperColouring}
For any $X'\subseteq X$ the total labelling on $D_{X'}$ is always proper. In addition  $D_{X'}$ has rainbow vertex set.
\end{lemma}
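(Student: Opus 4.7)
The plan is to unpack the definition of ``proper total labelling'' into concrete conditions on $D_{X'}$ and verify each using three elementary facts about the setup: $(a)$ $M$ is a rainbow matching, so distinct colours $u,v$ appearing in $M$ give vertex-disjoint edges $(u)_M,(v)_M$, in particular with $(u)_X\neq(v)_X$ and $(u)_Y\neq(v)_Y$; $(b)$ each colour class $M_i$ is a matching in $G$, so every vertex of $G$ is incident to at most one edge of any given colour; $(c)$ the matchings $M_1,\dots,M_{n+1}$ are edge-disjoint in $G$, so between any pair of vertices of $G$ there is at most one edge.

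First I would dispose of the rainbow vertex set claim: $c^*$ is labelled $*$, while any other vertex $v$ of $D_{X'}$ is a colour appearing in $M$ and receives label $(v)_X\in X$, so by $(a)$ all vertex labels are pairwise distinct. This also immediately delivers the adjacent-vertices condition of properness.

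Next I would check the three remaining properness conditions by short casework. For out-properness at a vertex $u$, two edges $uv_1,uv_2$ sharing a label $x$ would force colour-$u$ edges from $x$ to both $(v_1)_Y$ and $(v_2)_Y$, which by $(b)$ collapses to $v_1=v_2$. For in-properness at $v$, two edges $u_1v,u_2v$ sharing a label $x$ would produce a colour-$u_1$ edge and a colour-$u_2$ edge from $x$ to $(v)_Y$, and $(c)$ identifies them as the same edge of $G$, giving $u_1=u_2$. Finally, for an edge $uv$ with label $x\in X'\subseteq X$: if $x=(v)_X$ we would obtain an edge of colour $u$ between $(v)_X$ and $(v)_Y$ alongside the $M$-edge of colour $v$ between the same two vertices, contradicting $(c)$; if $x=(u)_X$ (assuming $u\neq c^*$) we would obtain two colour-$u$ edges at $(u)_X$, namely the $M$-edge to $(u)_Y$ and the new one to $(v)_Y$, contradicting $(b)$; when an endpoint is $c^*$ its label $*\notin X$ trivially differs from $x$.

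No step presents a real obstacle---the entire argument is definition-chasing once $(a)$--$(c)$ are isolated. The main care required is bookkeeping around the special vertex $c^*$, whose label is $*$ and whose $M$-edge and $Y$-partner are undefined, so that several sub-cases are handled vacuously; one also has to notice that the in-properness step is the one place where edge-disjointness is genuinely used, in the simple-graph sense supplied by the hypotheses of Theorem~\ref{MainTheorem}.
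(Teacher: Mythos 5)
Your proof is correct and takes essentially the same route as the paper's: both reduce each clause of total-properness to the three facts you isolate (rainbow $M$, colour classes are matchings, colour classes are edge-disjoint) and verify them by definition-chasing. The only cosmetic difference is that the paper handles out- and in-properness in one stroke, by assuming two distinct edges share a label and deducing that they then have both distinct tails and distinct heads, whereas you split this into two separate cases; the underlying contradictions invoked are identical.
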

\begin{proof}
Suppose that $uv$ and $u'v'$ are two distinct edges of $D_{X'}$ with the same label $x\in X'$.
By definition of $D_{X'}$ they correspond to two  edges $x(v)_Y$ and $x(v')_Y$ of $G$ having colours $u$ and $u'$ respectively. This implies that $u$ and $u'$ are different since otherwise we would have two edges of the same colour leaving $x$ in $G$ (which cannot happen since colour classes in $G$ are matchings). We also get that $v$ and $v'$ are distinct since otherwise we would have edges of colours both $u$ and $u'$ between $x$ and $(v)_Y$ in $G$ (contradicting the matchings forming $G$ being disjoint).

Let $uv$ be an edge of $D_{X'}$ labelled by $x$ and $x(v)_Y$ the corresponding colour $u$ edge of $G$. Then  $u$ cannot be labelled by ``$x$'' (since that would imply that the colour $u$ edge at $x$ would end at $(u)_Y$ rather than $(v)_Y$), and $v$ cannot be labelled by ``$x$'' (since then  there would be edges from $x$ to $(v)_Y$ in $G$ of both colours $u$ and $v$).

The fact that $D_X'$ has rainbow vertex set holds because $M$ being a matching implies that $(c)_X$ is distinct for any colour $c$.
\end{proof}

Recall that a path in a totally labelled graph is defined to be rainbow whenever all its vertices and edges have different colours.
The reason we defined the directed graph $D_{X'}$ is that rainbow paths in $D_{X'}$ correspond exactly to $X'$-switchings in $G$.  Let $P=v_0, \dots, v_{\ell}$ be a path in $D_{X'}$ for some $X'$. For each $i=0, \dots, \ell-1$ let $e_i$ be the colour $v_i$ edge of $G$ corresponding to the edge $v_{i} v_{i+1}$ in $D_{X'}$. 
We define $\sigma_P$ to be the sequence of edges $(e_0,$ $(v_{1})_M, e_1,$ $(v_{2})_M,$ $e_2, \dots, (v_{\ell-1})_M, e_{\ell-1},$ $(v_{\ell})_M)$. Notice that $(e(\sigma_P))_X$ is the set of labels of edges in $P$, and $(m(\sigma_P))_X$ is the set of labels of vertices in $P-v_0$.

The following lemma shows that if $P$ is rainbow then $\sigma_P$ is a switching.
\begin{lemma}\label{PathSwitching}
Let $P=v_0, \dots, v_{\ell}$ be a rainbow path in $D_{X'}$ for some $X'\subseteq X$.
Then $\sigma_P$ is an $X'$-switching from $v_0$ to $v_\ell$ of length $\ell$.
\end{lemma}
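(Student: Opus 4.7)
The proof is a direct verification that the sequence $\sigma_P$ satisfies each of the five conditions (i)--(v) of Definition~\ref{DefinitionSwitching}. The plan is to unpack $\sigma_P$ in terms of the path $P$, read off each condition, and match it to either the definition of $D_{X'}$ or to the ``rainbow'' hypothesis on $P$.

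First I would fix the notation. For each $i\in\{0,\dots,\ell-1\}$, let $x_i\in X'$ be the label of the directed edge $v_iv_{i+1}$ of $D_{X'}$, so $e_i$ is the colour-$v_i$ edge of $G$ from $x_i$ to $(v_{i+1})_Y$; and for $i\in\{1,\dots,\ell\}$ write $m_i=(v_i)_M$. A preliminary observation is that $m_i$ is well defined for every $i\geq 1$: the definition of $D_{X'}$ forbids edges ending at $c^*$, so $c^*$ can occur in $P$ only as $v_0$, and every other $v_i$ is a colour of $M$. With this setup conditions (i), (ii), and (v) are essentially immediate. Both $m_i$ and $e_i$ carry colour $v_i$, so setting $c_i:=v_i$ realises $\sigma_P$ as a switching from $c_0=v_0$ to $c_\ell=v_\ell$; $m_i\in M$ by construction, and $e_i\notin M$ because the colour-$v_i$ edge of $M$ meets $Y$ at $(v_i)_Y$ while $e_i$ meets $Y$ at $(v_{i+1})_Y\neq(v_i)_Y$; finally $(e_i)_X=x_i\in X'$ by the very definition of an edge of $D_{X'}$.

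The real (but still easy) work is in (iii) and in the disjointness/distinctness statements of (iv); each one reduces to showing that two $X$-endpoints or two $Y$-endpoints in $G$ differ. Here the crucial inputs are Lemma~\ref{ProperColouring} (the labelling on $D_{X'}$ is proper and the vertex labels are distinct) together with the hypothesis that $P$ is rainbow, which tells us that the edge labels $x_0,\dots,x_{\ell-1}$ and the vertex labels $(v_0),(v_1)_X,\dots,(v_\ell)_X$ are pairwise distinct. For (iii), the edges $e_{i-1}$ and $m_i$ share $(v_i)_Y=(m_i)_Y$, and their $X$-endpoints $x_{i-1}$ and $(v_i)_X$ differ because a proper labelling forces the label of an edge to differ from the labels of its endpoints. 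For (iv) and $i\neq j$: $c_i\neq c_j$ is just $v_i\neq v_j$; $e_i\cap e_j=\emptyset$ follows since $(v_{i+1})_Y\neq(v_{j+1})_Y$ and $x_i\neq x_j$ (distinct edge labels on the rainbow path); and $e_{i-1}\cap m_j=\emptyset$ follows since $(v_i)_Y\neq(v_j)_Y$ and $x_{i-1}\neq(v_j)_X$ (an edge label versus a vertex label, which are distinct because $P$ is rainbow).

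I do not expect a genuine obstacle: the content of the lemma is precisely that ``rainbow path in $D_{X'}$'' was designed as the graph-theoretic translation of ``$X'$-switching''. The one point that merits explicit care is the treatment of $c^*$, to be sure that $m_i$ is defined at every index the sequence $\sigma_P$ refers to; once this is noted, the verification is a routine bookkeeping of $X$- and $Y$-endpoints.
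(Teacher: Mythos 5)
Your proof is correct and takes essentially the same approach as the paper's: a direct verification of conditions (i)--(v) of Definition~\ref{DefinitionSwitching}, with (iv) deduced from the rainbow property of $P$ and the fact that $M$ is a matching. You are slightly more explicit than the paper in two places that it glosses over---observing that $c^*$ has no incoming edges in $D_{X'}$ (so each $m_i=(v_i)_M$ is defined for $i\geq 1$), and invoking Lemma~\ref{ProperColouring} to obtain $x_{i-1}\neq(v_i)_X$ when checking (iii)---which is a small improvement in rigour but not a different argument.
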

\begin{proof}
As in the definition of  $\sigma_P$, let $e_i$ be the colour $v_i$ edge of $G$ corresponding to the edge $v_{i} v_{i+1}$ in $D_{X'}$.

We need to check all the parts of the definition of ``$X'$-switching''. 
For part (i), notice that $(v_{1})_M, \dots, (v_{\ell})_M$ are edges of $M$ by definition of $(.)_M$, whereas $e_i$ cannot be the colour $v_i$ matching edge $(v_i)_M$ since $(e_i)_Y=(v_{i+1})_M\cap Y$ which is distinct from $(v_i)_M\cap Y$.
Parts (ii), (iii), and (v) follow immediately from the definition of $e_i$ and the graph $D_{X'}$. 

Part (iv) follows from the fact that $P$ is a rainbow path. Indeed to see that for $i\neq j$ we have $e_i\cap e_j=\emptyset$, notice that $e_i\cap e_j\cap X=\emptyset$ since $v_i v_{i+1}$ and $v_j v_{j+1}$ have different labels in $D_{X'}$, and that $e_i\cap e_j\cap Y=\emptyset$ since $(e_i)_Y\in (v_{i+1})_M$, $(e_j)_Y\in (v_{j+1})_M$, and $(v_{i+1})_M\cap(v_{j+1})_M=\emptyset$. Similarly for $i\neq j$, $e_{i-1}\cap (v_j)_M\cap X=\emptyset$ since $v_{i-1}v_{i}$ and $v_j$ have different labels in $D_{X'}$, and $e_{i-1}\cap (v_j)_M\cap Y=\emptyset$ since $(e_{i-1})_Y\in (v_{i})_M$ and $(v_{i})_M\neq (v_{j})_M$. Finally, $c_i\neq c_j$ since $v_0, \dots, v_{\ell}$ are distinct.
\end{proof}
Although it will not be used in our proof, it is worth noticing that the converse of Lemma~\ref{PathSwitching} holds i.e. to every $X'$-switching $\sigma$ there corresponds a unique rainbow path $P$ in $D_{X'}$ such that $\sigma=\sigma_P$.

So far all our lemmas were true regardless whether the rainbow matching $M$ was maximum or not. Subsequent lemmas will assume that $M$ is maximum. 
The following lemma shows that for a free set $X'$, vertices in $D_{X'}$ have large out-degree.
\begin{lemma}\label{ShortPathHighDegree}
Suppose there is no rainbow matching  in $G$ of size $n+1$.
Let $X'$, $T$, $k$ and $c$ be such that $X'$ is $(2k,T,c)$-free.
Let $D=D_{X'}\setminus (T)_C$, $v$ a vertex of $D$, and  $P$ a rainbow path in $D$ from $c$ to $v$ of length at most $k$. 
Then we have 
$$|N^+_D(v)|\geq (1+\epsilon_0)n+|X'|-|X|-2|P|-|T|.$$
\end{lemma}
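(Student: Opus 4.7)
The plan is to prove the inequality by a direct count of the colour $v$ edges in $G$, of which there are at least $(1+\epsilon_0)n$ by hypothesis, and to show that (modulo controllable error) each of these edges corresponds to an out-edge of $v$ in $D$. A colour $v$ edge $e = xy$ gives an out-edge $vw$ in $D_{X'}$ exactly when $x \in X'$ and $y = (w)_Y$ for some colour $w$ present in $M$, and this out-edge survives in $D = D_{X'}\setminus (T)_C$ iff additionally $y \notin (T)_Y$. Since colour $v$ is itself a matching, distinct colour $v$ edges with $x\in X'$ and $y \in V(M)\setminus (T)_Y$ give distinct out-neighbours $w$, so the count is injective.

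I would then dispose of the easy obstructions: colour $v$ edges with $x \notin X'$ number at most $|X|-|X'|$; colour $v$ edges with $x \in X'$ and $y \in (T)_Y$ number at most $|T|$; and using Lemma~\ref{PathSwitching} to turn $P$ into an $X'$-switching $\sigma_P$ of length $|P|$ from $c$ to $v$, colour $v$ edges with $x \in X' \cap (\sigma_P)_X$ number at most $|(\sigma_P)_X| \leq 2|P|$. This last bound also absorbs the possible ``self-loop'' edge $(v)_M$, since its $X$-endpoint $(v)_X = (m_{|P|})_X$ lies in $(\sigma_P)_X$. If I can show that no colour $v$ edge $e = xy$ has $x \in X' \setminus (\sigma_P)_X$ and $y \in Y_0$, then every remaining colour $v$ edge contributes to $|N^+_D(v)|$, and summing the four bounds gives the desired inequality.

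The key step is thus ruling out this last case by contradiction. Suppose such $e = xy$ existed. I would apply Lemma~\ref{SwitchingLemma} to $\sigma_P$ with $A = \emptyset$ and $B = \{x\}$: the disjointness hypotheses hold because $x \in X'$, $x \notin T$ (by the definition of $(2k,T,c)$-free), $x \notin (\sigma_P)_X$, and the length of $\sigma_P$ is $|P| \leq k$. This produces a rainbow matching $\tilde M$ of size $n$ missing colour $v$, with $x \notin V(\tilde M)$ and $(m(\sigma_P))_X\cap V(\tilde M)=\emptyset$. The intended conclusion is that $\tilde M + e$ is a rainbow matching of size $n+1$: it has colour $v$ (which $\tilde M$ misses) and $x \notin V(\tilde M)$, so the only remaining verification is $y \notin V(\tilde M)$, which would contradict the hypothesis that no rainbow matching of size $n+1$ exists.

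The main obstacle I foresee is precisely this last step. From the proof of Lemma~\ref{SwitchingLemma}, $(\tilde M)_Y = (M')_Y$ for an auxiliary matching $M'$ guaranteed by the freeness of $X'$, and the definition of free does not a priori force $(M')_Y \subseteq V(M) \cap Y$. In the archetypal case $X' = X_0$ and $c = c^*$ one may simply take $M' = M$, so $(\tilde M)_Y = (M)_Y$ is disjoint from $Y_0$ and the argument closes cleanly. In the general case I expect the proof to appeal to an additional property of the free sets constructed in the proof of Theorem~\ref{MainTheorem} --- namely that $M'$ may always be chosen with $(M')_Y\subseteq V(M)\cap Y$ --- or to introduce a small refinement of the freeness definition to enforce this. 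Once $y \notin V(\tilde M)$ is secured, the contradiction is immediate and rearranging the counting bounds yields $|N^+_D(v)| \geq (1+\epsilon_0)n + |X'| - |X| - 2|P| - |T|$.
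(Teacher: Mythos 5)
Your argument follows the same route as the paper's: count the at least $(1+\epsilon_0)n$ colour-$v$ edges of $G$, discard those with $X$-endpoint outside $X'$ (at most $|X|-|X'|$), those through $(\sigma_P)_X$ (at most $2|P|$), and those landing in $(T)_Y$ (at most $|T|$; the paper instead obtains $|N^+_{D_{X'}}(v)|\ge(1+\epsilon_0)n+|X'|-|X|-2|P|$ and subtracts $|T|$ at the end because $D=D_{X'}\setminus(T)_C$ deletes at most $|T|$ out-neighbours --- an equivalent bookkeeping choice), and then use Lemma~\ref{PathSwitching} plus Lemma~\ref{SwitchingLemma} with $A=\emptyset$, $B=\{x\}$ to rule out colour-$v$ edges from $X'\setminus(\sigma_P)_X$ into $Y_0$.

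The ``main obstacle'' you flag is a genuine unstated step in the paper's own proof, not an artifact of your approach. The paper asserts that the matching $\tilde M$ produced by Lemma~\ref{SwitchingLemma} ``can be extended to a rainbow $n+1$ matching by adding the edge $xy$,'' but this requires $y\notin V(\tilde M)$, and since $(\tilde M)_Y=(M')_Y$ for the matching $M'$ supplied by the freeness of $X'$, Definition~\ref{DefinitionFree} alone does not force $(M')_Y$ to avoid $Y_0$. Your diagnosis of the fix is also the right one: in every use of freeness in this paper the witnessing matching $M'$ satisfies $(M')_Y=(M)_Y$ --- true for $X_0$ where one takes $M'=M$, and preserved under Lemma~\ref{SwitchingLemma} precisely because $(\tilde M)_Y=(M')_Y$, so the property propagates through each application of Lemma~\ref{IncrementLemma}. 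Making this invariant explicit (either by adding ``$(M')_Y=(M)_Y$'' to Definition~\ref{DefinitionFree}, or by recording it as a standing hypothesis carried along the induction) closes the gap; with it, $y\in Y_0$ immediately gives $y\notin(\tilde M)_Y$ and the contradiction is complete.

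One small remark on your bookkeeping: the ``self-loop'' case you worry about cannot in fact occur for a colour-$v$ edge $e\ne(v)_M$, since two colour-$v$ edges form part of a matching and hence $e$ cannot meet $(v)_Y$; the only colour-$v$ edge that would produce a loop is $(v)_M$ itself, which (as you note) is already absorbed because $(v)_X\in(m(\sigma_P))_X\subseteq(\sigma_P)_X$ whenever $|P|\ge 1$, and when $|P|=0$ one has $v=c$ and $(c)_X\notin X'$ by the freeness condition $c\notin(X'\cup T)_C$.
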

\begin{proof}
Notice that since $P$ is contained in $D_{X'}\setminus (T)_C$ and since $X'$ being $(k,T,c)$-free  implies $X'\cap T=\emptyset$, we can conclude that $(\sigma_P)_X\cap T=\emptyset$. 

Therefore, Lemma~\ref{SwitchingLemma} applied with $A=\emptyset$ implies that for any  $B\subseteq X'$ with $|B|\leq k$ and $B\cap (\sigma_P)_{X}=\emptyset$, there is a rainbow matching $M'$ of size $n$ which is disjoint from $B$ and misses colour $v$. Since there are no rainbow matchings of size $n+1$ in $G$ this means that there are no colour $v$ edges from $X'\setminus (\sigma_P)_{X}$ to $Y_0$ (indeed if such an edge $xy$ existed, then we could apply Lemma~\ref{SwitchingLemma} with $B=\{x\}$ in order to obtain a rainbow matching $M'$ missing colour $v$ and vertex $x$ which can be extended to a rainbow $n+1$ matching by adding the edge $xy$).

We claim that there are at least $(1+\epsilon_0)n+|X'|-|X|-2|P|$ colour $v$ edges from $X'\setminus (\sigma_P)_X$. Indeed out of the $(1+\epsilon_0)n$ colour $v$ edges in $G$ at most $|X|-|X'|$ of them can avoid $X'$, and at most $2|P|$ of them can pass through $(\sigma_P)_X$, leaving at least $(1+\epsilon_0)n-(|X|-|X'|)-2|P|$ colour $v$ edges to pass through  $X'\setminus (\sigma_P)_X$. Since none of these edges can touch $Y_0$, each of them must give rise to an out-neighbour of $v$ in $D_{X'}$. This shows that $|N^+_{D_{X'}}(v)|\geq (1+\epsilon_0)n+|X'|-|X|-2|P|$ which implies the result.
\end{proof}

The following lemma is the essence of the proof of Theorem~\ref{MainTheorem}. It roughly says that given a free set $X_1$ containing $X_0$, there another free set $X_2$ containing $X_0$ such that $X_2$ is much bigger than $X_1$, but has worse parameter $k$. The proof of this lemma combines everything in this section with Lemmas~\ref{MengerLemma},~\ref{LargeRainbowConnectedSetLemma} and~\ref{CloseHighDegreeSubgraph} from Section~\ref{SectionConnectedness}.

\begin{lemma}\label{IncrementLemma}
Let $k_1$ be an integer such that $n\geq 10^{20} \epsilon_0^{-8}k_1$ and $k_1\geq 20\epsilon_0^{-1}$. Set $k_2=10^{-6}\epsilon_0^{2}k_1$.
Suppose there is no rainbow matching  in $G$ of size $n+1$.
\begin{itemize}
\item Suppose that we have $X_1, T_1 \subseteq X$ and a colour $c_1$ such that $X_1$ is $(k_1, T_1, c_1)$-free and we also have $X_0\subseteq X_1\cup T_1$ and $|T_1|\leq k_1-30\epsilon_0^{-1}$.
\item Then there are $X_2, T_2\subseteq X$ and a colour $c_2$ such that $X_2$ is $(k_2, T_2, c_2)$-free and we also have $X_0\subseteq X_2\cup T_2$, $|T_2|\leq |T_1|+ 30\epsilon_0^{-1}$ and
$$|X_2|> |X_1|+\frac{\epsilon_0}2n.$$
\end{itemize}
\end{lemma}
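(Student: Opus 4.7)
The plan is to work in the directed graph $D = D_{X_1} \setminus (T_1)_C$, which by Lemma~\ref{ProperColouring} is properly totally labelled with rainbow vertex set and contains $c_1$ as a vertex. Lemma~\ref{ShortPathHighDegree}, applied via the $(k_1, T_1, c_1)$-freeness of $X_1$, implies that every vertex $v$ reachable from $c_1$ by a rainbow path of length at most $\epsilon_0^{-1}$ has out-degree at least $(1+\epsilon_0)n + |X_1| - |X| - |T_1| - 2\epsilon_0^{-1}$; combined with $X_0 \subseteq X_1 \cup T_1$ (which forces $|X|-|X_1| \le n + |T_1|$), this is of order $\epsilon_0 n$. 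First apply Lemma~\ref{CloseHighDegreeSubgraph} at $c_1$ with a small parameter of order $\epsilon_0$ to obtain a set $N$ of vertices within bounded rainbow distance of $c_1$ such that $D[N]$ still has large minimum out-degree; then apply Lemma~\ref{LargeRainbowConnectedSetLemma} to $D[N]$ with connectivity parameter $K$ of order $k_2 d$ to extract a $(K,d)$-connected subset $A \subseteq N$ of size at least $(1+\epsilon_0)n + |X_1| - |X| - |T_1| - O(\epsilon_0 n/100)$.

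Next, set $T_2 = T_1 \cup \{(c_1)_X\}$, choose $c_2 \in A$ with $c_2 \ne c^*$ and $(c_2)_X \notin X_1$ (feasible since $|A|$ vastly exceeds $|X_1 \setminus X_0|$), and define $X_2 = (X_1 \cup (A)_X) \setminus (T_2 \cup \{(c_2)_X\})$. The bound $|T_2| \le |T_1|+1 \le |T_1| + 30\epsilon_0^{-1}$ is immediate; $X_0 \subseteq X_1 \cup T_1 \subseteq X_2 \cup T_2$ follows since $(c_1)_X,(c_2)_X \in V(M)\cap X$ lie outside $X_0$; and the freeness-required disjointness $X_2 \cap T_2 = \emptyset$ and $c_2 \notin (X_2 \cup T_2)_C$ hold by construction. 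For the size bound, labels of $V(D)$ lie in $(X \setminus X_0) \setminus T_1$, so $|(A)_X \cap X_1| \le |X_1 \setminus X_0|$; combining with $|X_1 \cap X_0| \ge |X_0| - |T_1| = |X| - n - |T_1|$ gives $|X_2| \ge |A| + |X| - n - |T_1| - O(1)$, and substituting the lower bound on $|A|$ yields $|X_2| > |X_1| + \epsilon_0 n/2$, since $|T_1|$ is negligible compared to $\epsilon_0 n$ by the hypothesis $n \ge 10^{20}\epsilon_0^{-8}k_1$.

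The main step is verifying that $X_2$ is $(k_2, T_2, c_2)$-free. Given admissible $A' \subseteq M \setminus ((T_2)_M \cup (c_2)_M)$ and $B \subseteq X_2$ with $|A'|,|B| \le k_2$ and $(A')_X \cap B = \emptyset$, split $B = B_1 \sqcup B_2$ with $B_1 = B \cap X_1$ and $B_2 = B \setminus X_1 \subseteq (A)_X$, and let $A_{B_2} = \{a \in A : (a)_X \in B_2\}$. The plan is to construct a rainbow $c_1$-$c_2$ path $P$ in $D$ of length at most $k_1/2$ passing through every vertex of $A_{B_2}$ with $(\sigma_P)_X \cap (B_1 \cup (A')_X) = \emptyset$: take a short rainbow path $P_0$ from $c_1$ to some $u \in A$ of length at most $\epsilon_0^{-1}$ (available since $A \subseteq N$), then apply Lemma~\ref{MengerLemma} inside $A$ to the $\le k_2 + 2$ vertices $u, A_{B_2}, c_2$ with forbidden colour set $B_1 \cup (A')_X \cup \{(c_1)_X\}$ augmented by the internal labels of $P_0$, to extend to $c_2$. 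By Lemma~\ref{PathSwitching}, $\sigma_P$ is an $X_1$-switching from $c_1$ to $c_2$, and with $(c_1)_X \in T_2$ (so $A' \subseteq M - (c_1)_M$) all disjointness hypotheses of Lemma~\ref{SwitchingLemma} hold. Applying that lemma with parameters $A'$ and $B_1$, using $(k_1, T_1, c_1)$-freeness of $X_1$, produces a rainbow matching $\tilde M$ of size $n$ missing $c_2$, agreeing with $M$ on $A'$, and satisfying $\tilde M \cap B_1 = \tilde M \cap (m(\sigma_P))_X = \emptyset$; since $A_{B_2} \subseteq V(P)$ forces $(m(\sigma_P))_X \supseteq B_2$, we conclude $\tilde M \cap B = \emptyset$. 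The main obstacle will be calibrating the auxiliary $\epsilon$-parameters and the connectivity parameter $K$ so that the combined path has length at most $k_1/2$ while $A$ remains $(K,d)$-connected in the presence of the $O(k_2 + \epsilon_0^{-1})$-sized forbidden label set.
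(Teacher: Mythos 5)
Your overall strategy is the same as the paper's: pass to $D = D_{X_1}\setminus (T_1)_C$, use Lemma~\ref{ShortPathHighDegree} to bound out-degrees near $c_1$, apply Lemma~\ref{CloseHighDegreeSubgraph} and then Lemma~\ref{LargeRainbowConnectedSetLemma} to get a $(K,d)$-connected set $A$ near $c_1$, and verify freeness of the new set by gluing a short connecting path from $c_1$ onto a Menger path inside $A$ and invoking Lemma~\ref{SwitchingLemma}. The size estimate for $X_2$ also goes through with your slightly different definition $X_2 = (X_1\cup (A)_X)\setminus(T_2\cup\{(c_2)_X\})$, since $(A)_X$ is disjoint from $X_0$ and $|X_1\cap X_0|\geq |X_0|-|T_1|$.

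However, there is a genuine gap in the freeness verification, and it comes from your definition $T_2 = T_1\cup\{(c_1)_X\}$. Lemma~\ref{SwitchingLemma} requires $(\sigma)_X\cap (A')_X = \emptyset$ and $(\sigma)_X\cap B = \emptyset$, where $\sigma = \sigma_{P_0+P}$. You do arrange for the Menger path $P$ to avoid labels from $(A')_X\cup B_1$ (by putting them into the forbidden set $S$), but you have no control over $(\sigma_{P_0})_X$: the connecting path $P_0$ is just \emph{some} short rainbow path from $c_1$ into $A$, and nothing stops its labels from intersecting $(A')_X$ or $B_1$. The definition of $(k_2,T_2,c_2)$-free must hold for \emph{every} admissible $A'\subseteq M\setminus((T_2)_M\cup(c_2)_M)$ and $B\subseteq X_2$, so if $(\sigma_{P_0})_X\not\subseteq T_2$, an adversary can pick $A'$ or $B$ hitting those labels and the disjointness hypotheses of Lemma~\ref{SwitchingLemma} fail. (Note also that Lemma~\ref{CloseHighDegreeSubgraph} gives only a distance bound, not the rainbow $(k,d)$-connectivity you would need to re-route $P_0$ around $(A')_X\cup B_1$ for each choice of $A',B$.) A secondary symptom of the same omission: your choice ``$c_2\neq c^*$, $(c_2)_X\notin X_1$'' does not force $c_2\notin V(P_0)$, so $P_0+P$ need not be a path.

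The paper's fix is exactly what is missing: it fixes the connecting path $Q$ once and for all, \emph{before} freeness is checked, and sets $T_2 = T_1\cup(\sigma_Q)_X\cup (c_1)_X$ and chooses $c_2$ with $(c_2)_X\notin(\sigma_Q)_X$. Then $A'\subseteq M\setminus(T_2)_M$ and $B\subseteq X_2\subseteq X\setminus T_2$ automatically avoid $(\sigma_Q)_X$, and the Menger path need only additionally dodge $(A')_X\cup B_1\cup(\sigma_Q)_X$. Your argument is repaired by the same modification, and the bound $|T_2|\leq |T_1|+30\epsilon_0^{-1}$ is still comfortably satisfied since $|\sigma_{P_0}|= O(\epsilon_0^{-1})$.
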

\begin{proof}
Set $d=10^5\epsilon_0^{-2}$.
Let $D=D_{X_1}\setminus (T_1)_C$. Recall that Lemma~\ref{ProperColouring} implies that $D$ is properly labelled with rainbow vertex set.

Lemma~\ref{ShortPathHighDegree}, together with  $n\geq 10^{20} \epsilon_0^{-8}k_1$, $k_1\geq 20\epsilon_0^{-1}$, and $|T_1|\leq k_1$ imply that all vertices in $D$ within rainbow distance $(10\epsilon_0)^{-1}$ of $c_1$  satisfy $d^+(v)\geq (1+\epsilon_0)n+|X_1|-|X|-30\epsilon_0^{-1}\geq(1+0.9\epsilon_0)n+|X_1|-|X|$. 

Lemma~\ref{CloseHighDegreeSubgraph} applied with $\epsilon=0.1\epsilon_0$ implies that there is a subgraph $D'$  in $D$ satisfying $\delta^+(D')\geq (1+0.7\epsilon_0)n+|X_1|-|X|$ and $\dRainbow{c_1, v}\leq 10\epsilon_0^{-1}$ for all $v\in D'$. 
Therefore, using $n\geq 10^{20} \epsilon_0^{-8}k_1$,  we can apply Lemma~\ref{LargeRainbowConnectedSetLemma} to $D'$ with $\epsilon=0.1\epsilon_0$ and $k=9k_2d$ in order to find a  set $W$ with $|W|\geq(1+0.6\epsilon_0)n+|X_1|-|X|$ which is $(9k_2 d, d)$-connected in $D'$. 

Since $W\subseteq D'$, there is a path, $Q$, of length $\leq 10\epsilon_0^{-1}$ from $c_1$ to some $q\in W$. Let $c_2$ be any vertex in $W$ with $(c_2)_X\not \in (\sigma_Q)_X$. Let $T_2=T_1\cup (\sigma_Q)_X\cup (c_1)_X$. 
Let $X_2=((W)_X\cup X_0)\setminus (T_2\cup (c_2)_X)$.
We claim that $X_2$, $T_2$, and $c_2$ satisfy the conclusion of the lemma.

First we show that $X_2$  is $(k_2, T_2, c_2)$-free. 
The facts that  $T_2\cap X_2=\emptyset$ and $c_2\not\in X_2\cup T_2$ follow from the construction of $X_2$, $T_2$, and $c_2$.
Let $A$ be any set of $k_2$ edges of $M\setminus ((T_2)_M\cup (c_2)_M)$, and $B\subseteq X_2$ any set of $k_2$ vertices such that $(A)_X\cap B=\emptyset$. 
Let $B_{X_0}=B\cap X_0$ and $B_{W}=B\cap (W)_X=B\setminus B_{X_0}$.
By Lemma~\ref{MengerLemma}, applied with $k=3k_2$, $d=d$, $A=W$, $\{q, a_1, \dots, a_k, c_2\}=(B_W)_C$, and $S=(A)_X\cup (\sigma_Q)_X\cup B_{X_0}$, there is a rainbow path $P$ in $D'$ of length $\leq 3k_2 d$ from $q$ to $c_2$ which is disjoint from $V(Q-q)$ and $(A)_C$, passes through every colour of $(B_W)_C$, and whose edges and vertices don't have labels in $(A)_X\cup (\sigma_Q)_X\cup B_{X_0}\setminus (q)_X$. Notice that this means that $Q+P$ is a rainbow path from $c_1$ to $c_2$.

We apply Lemma~\ref{SwitchingLemma} with $X'=X_1$, $T=T_1$, $c=c_1$, $\sigma=\sigma_{Q+P}$, $A=A$, $B=B_{X_0}$. For this application notice that $\sigma_{Q+P}$ is a $X_1$-switching of length $\leq k_1/2$, which holds because of Lemma~\ref{PathSwitching} and because $2|Q|+2|P|\leq 20\epsilon_0^{-1}+2k_2d\leq k_1/2$.
We also need to check the various disjointness conditions---$(A)_X\cap T_1=(A)_X\cap (\sigma_{Q+P})_X=(A)_X\cap B_{X_0}=\emptyset$ (which hold because $(A)_X$ was disjoint from $T_2$, $P$, and $B$), $(\sigma_{Q+P})_X\cap T_1=\emptyset$ (which holds since vertices and edges in $D$ have no labels from $T_1$),  and $(\sigma_{Q+P})_X\cap B_{X_0}=\emptyset$ (which holds since $B$ was disjoint from $T_2$ and $P$ had no labels from $B_{X_0}$).
Therefore Lemma~\ref{SwitchingLemma} produces a rainbow matching $M'$ of size $n$ which agrees with $M$ on $A$, avoids $(m(\sigma_{Q+P}))_X\cup B_{X_0}$, and misses colour $c_2$. Since $P$ passes through every colour in $(B_W)_C$, we have $B_W\subseteq (m(\sigma_{Q+P}))_X$ and so $M'$ avoids all of $B$.
Since $A$ and $B$ were arbitrary, we have shown that $X_2$ is $(k_2,T_2,c_2)$-free.

The identity $X_0\subseteq X_2\cup T_2$ holds because $X_0\subseteq  X_1\cup T_1\subseteq X_2\cup T_2$.
Notice that $|T_2|\leq |T_1|+ 30\epsilon_0^{-1}$ follows from $|Q|\leq 10\epsilon^{-1}$.

Finally, $|X_2|> |X_1|+\epsilon_0n/2$  holds because since $(W)_X$ was disjoint from $X_0$ we have 
$$|X_2|\geq |X_0|+|W|\geq |X_0|+(1+0.6\epsilon_0)n+|X_1|-|X|=|X_1|+ 0.6\epsilon_0n.$$
\end{proof}

We are finally ready to prove Theorem~\ref{MainTheorem}. The proof consists of starting with $X_0$ and applying Lemma~\ref{IncrementLemma} repeatedly, at each step finding a free set $X_i$ which is $\epsilon n/2$ bigger than $X_{i-1}$. This clearly cannot be performed more than $2\epsilon_0 ^{-1}$ times (since otherwise it would contradict $|X_i|\leq |X|=|X_0|+n$), and hence the ``there is no rainbow matching  in $G$ of size $n+1$'' clause of Lemma~\ref{IncrementLemma} could not be true.
\begin{proof}[Proof of Theorem~\ref{MainTheorem}] 
Let $G$ be a bipartite graph which is the union of $n_0\ge N_0$ disjoint matchings each of size at least $(1+\epsilon_0)n_0$. Let $M$ be the largest rainbow matching in $G$ and $c^*$ the colour of any matching not used in $M$.  Let $n$ be the number of edges of $M$. Since $M$ is maximum, Lemma~\ref{GreedyLemma} tells us that $n\geq N_0/2$. Let $X_0=X\setminus M$ and $Y_0=Y\setminus M$. Suppose for the sake of contradiction that $n<n_0$.

Let $T_0=\emptyset$, $k_0=(10^{-6}\epsilon^{-2})^{2\epsilon^{-1}}$, and $c_0=c^*$. Notice that since $X_0$ is $(n, T_0, c_0)$-free and $n\geq N_0/2\geq k_0$ we get that $X_0$ is $(k_0, T_0, c_0)$-free.
For $i=1, \dots, 2\epsilon^{-1}$, we set $k_{i}=10^{-6}\epsilon_0^{2}k_{i-1}$. 

For $i=0, \dots, 2\epsilon^{-1}$ we repeatedly apply Lemma~\ref{IncrementLemma} to $X_i$, $k_i$, $T_i$, $c_i$ in order to obtain sets  $X_{i+1}$, $T_{i+1}\subseteq X$ and a colour $c_{i+1}$ such that $X_{i+1}$ is $(k_{i+1}, T_{i+1}, c_{i+1})$-free, $X_0\subseteq X_{i+1}\cup T_{i+1}$, $|T_{i+1}|\leq |T_i|+30\epsilon_0^{-1}$, and $|X_{i+1}|>|X_i|+\epsilon_0n/2$. To see that we can repeatedly apply   Lemma~\ref{IncrementLemma} this way we only need to observe that there are no rainbow $n+1$ matchings in $G$, and that for $i\leq 2\epsilon^{-1}$ we always have $n\geq 10^{20} \epsilon_0^{-8}k_i$,  $k_i\geq 10\epsilon_0^{-1}$, and $|T_i|\leq 30\epsilon^{-1}i\leq k_i-30\epsilon^{-1}$.

But now we obtain that $|X_{2\epsilon^{-1}}|> |X_0|+n=|X|$ which is a contradiction since $X_i$ is a subset of $X$.
\end{proof}

\section{Golden Ratio Theorem}\label{SectionGoldenRatio}
In this section we prove Theorem~\ref{GoldenRatioTheorem}. The proof uses Theorem~\ref{Woolbright} as well as Lemma~\ref{CloseSubgraphsLowExpansion}.
\begin{proof}[Proof of Theorem~\ref{GoldenRatioTheorem}.]
The proof is by induction on $n$. The case ``$n=1$'' is trivial since here $G$ is simply a matching.
Suppose that the theorem holds for all $G$ which are unions of $<n$ matchings.
Let $G$ be a graph which is the union of $n$ matchings each of size $\phi n+ 20n/ \log n$. Suppose that $G$ has no rainbow matching of size $n$.
Let $M$ be a maximum rainbow matching in $G$. By induction we can suppose that $|M|= n-1$. Let $c^*$ be the missing colour in $M$.

Let $X_0=X\setminus V(M)$ and $Y_0=Y\setminus V(M)$.  Notice that for any colour $c$ there are at least $(\phi-1) n+20n/\log n$ colour $c$ edges from $X_0$ to $Y$ and at least at least $(\phi-1) n+20n/\log n$ colour $c$ edges from $Y_0$ to $X$. If $n< 10^6$, then this would give more than $n$ colour $c^*$ edges from $X_0$ to $Y$, one of which could be added to $M$ to produce a larger matching. Therefore, we have that $n\geq 10^6$.

We define an edge-labelled directed graph $D$ whose vertices are the colours in $G$, and whose edges are labelled by vertices from $X_0\cup Y_0$. We set $cd$ an edge in $D$ with label $v\in X_0\cup Y_0$ whenever there is a colour $c$ edge from $v$ to the colour $d$ edge of $M$. Notice that $D$ is out-proper---indeed if edges $ux$ and $uy\in E(D)$ had the same label $v\in X_0\cup Y_0$, then they would correspond to two colour $u$ edges touching $v$ in $G$ (which cannot happen since the colour classes of $G$ are matchings).

Recall that $\dRainbow{x,y}$ denotes the length of the shortest rainbow $x$ to $y$ path in $D$.

We'll need the following two claims.
\begin{claim}\label{GoldRatioFewXYedges}
For every $c\in V(D)$, there are at most $\dRainbow{c^*,c}$ colour $c$ edges between $X_0$ and $Y_0$.
\end{claim}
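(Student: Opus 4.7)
The plan is a contradiction argument that turns a rainbow path in $D$ into an augmenting structure for the matching. Suppose there are at least $t+1$ colour $c$ edges between $X_0$ and $Y_0$ where $t = \dRainbow{c^*,c}$. Take a shortest rainbow path $c^*=c_0,c_1,\dots,c_t=c$ in $D$, and let $v_i$ be the label of $c_{i-1}c_i$. Since $D$ is out-proper and the path is rainbow, the labels $v_1,\dots,v_t$ are distinct vertices of $X_0\cup Y_0$. By the definition of $D$, for each $i$ there is a colour $c_{i-1}$ edge $e_i$ of $G$ from $v_i$ to an endpoint of $m_i:=(c_i)_M$ (which is defined because $c_i\ne c^*$).

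Next I would form $M' := (M \setminus\{m_1,\dots,m_t\}) \cup \{e_1,\dots,e_t\}$ and verify it is a rainbow matching of size $n-1$. For vertex-disjointness, note that the $v_i$ are distinct and lie in $X_0\cup Y_0$, hence outside $V(M)$; and each $e_i$ meets $V(M)$ only at an endpoint of $m_i$, with the $m_i$ pairwise disjoint. So the $e_i$ are pairwise vertex-disjoint and also disjoint from $M\setminus\{m_1,\dots,m_t\}$. For rainbowness, the new edges carry the distinct colours $c_0,c_1,\dots,c_{t-1}$, which together with the colours of $M\setminus\{m_1,\dots,m_t\}$ account for every colour except $c_t=c$. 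So $M'$ is a rainbow matching of size $n-1$ missing exactly colour $c$.

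Finally I would extend $M'$ to contradict maximality of $M$. Each vertex $v_i$ (seen as lying in $X_0$ or in $Y_0$) is incident to at most one colour $c$ edge between $X_0$ and $Y_0$, so the $t$ labels block at most $t$ of the $t+1$ given colour $c$ edges; pick an unblocked one $xy$ with $x\in X_0$, $y\in Y_0$, $\{x,y\}\cap\{v_1,\dots,v_t\}=\emptyset$. Since the only vertices of $X_0\cup Y_0$ used by $M'$ are the $v_i$'s, both $x$ and $y$ are unmatched in $M'$, so $M'\cup\{xy\}$ is a rainbow matching of size $n$, contradicting that $M$ was maximum.

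The only delicate step is the bookkeeping for $M'$: one must use that $v_i\notin V(M)$ to avoid clashes among the new edges and the old ones, and track the missing colour through the swap (missing colour goes from $c^*$ to $c$ because $c_0=c^*$ re-enters while $c_t=c$ leaves). Everything else is pigeonhole, so I expect no serious obstacle beyond making the indices line up cleanly.
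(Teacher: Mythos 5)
Your argument is correct and essentially identical to the paper's: both take a shortest rainbow $c^*$--$c$ path in $D$, swap the matching edges along it for the $G$-edges encoded by its labels, and then append a colour-$c$ edge between $X_0$ and $Y_0$ that avoids the at most $t$ labels, contradicting maximality of $M$. The only cosmetic differences are that you split the construction into building the intermediate size-$(n-1)$ rainbow matching $M'$ missing colour $c$ and then adding the unblocked colour-$c$ edge, while the paper writes the final size-$n$ matching as one alternating sum, and your appeal to out-properness is redundant (rainbowness of the path already gives distinctness of the labels $v_1,\dots,v_t$) but harmless.
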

\begin{proof}
Let $P=c^*p_1 \dots, p_k c$ be a rainbow path of length $\dRainbow{c^*,c}$ from $c^*$ to $c$ in $D$.  For each $i$, let $m_i$ be the colour $p_i$ edge of $M$, and let $e_i$ be the colour $p_i$ edge from the label of $p_ip_{i+1}$ to $m_{i+1}$. Similarly, let $e_{c^*}$ be the colour $c^*$ edge from the label of $c^*p_1$ to $m_{1}$, and let $m_{c}$ be the colour $c$ edge of $M$.
If there are more than $\dRainbow{c^*,c}$ colour $c$ edges between $X_0$ and $Y_0$, then there has to be at least one such edge, $e_{c}$, which is disjoint from $e_{c^*}, e_1, \dots, e_{k}$. Let
$$M'=M+e_{c^*}-m_1+e_1-m_2+e_2\dots -m_{k-1}+e_{k-1}-m_{c}+e_{c}.$$
The graph $M'$ is clearly a rainbow graph with $n$ edges. We claim that it is a matching. Distinct edges $e_i$ and $e_j$ satisfy $e_i\cap e_j=\emptyset$ since $P$ is a rainbow path. The edge $e_i$ intersects $V(M)$ only in one of the vertices of $m_i$, which are not present in $M'$. This means that $M'$ is a rainbow matching of size $n$  contradicting our assumption that $M$ was maximum.
\end{proof}

\begin{claim}\label{CloseSubgraphGoldenRatio}
There is a set $A\subseteq V(D)$ containing $c^*$ such that for all $v\in A$  we have $|N^+(a)\setminus A|\leq  n/\log n$ and  $\dRainbow{c,v} \leq \log n$.
\end{claim}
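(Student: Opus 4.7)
The plan is to construct $A$ by an iterative greedy expansion: start from $A_0=\{c^*\}$ and, so long as some vertex of the current set has too many out-neighbours outside the set, enlarge by extending a certified rainbow path. Along with each $v \in A$ we will maintain a fixed rainbow $c^*$-to-$v$ path $P_v$ in $D$ whose length equals the step at which $v$ was first added; this lets us control $\dRainbow{c^*,v}$ as $A$ grows.

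Formally, set $A_0=\{c^*\}$ and let $P_{c^*}$ be the trivial length-$0$ path. At step $t \geq 0$, if some $v\in A_t$ satisfies $|N^+(v)\setminus A_t| > n/\log n+\log n$, pick such a $v$ and, for every $u\in N^+(v)\setminus A_t$ whose $vu$-edge label does not already appear among the edge labels of $P_v$, add $u$ to form $A_{t+1}$ and set $P_u:=P_v+u$ (a rainbow path of length $\leq t+1$, since $u \notin V(P_v)$ because $u \notin A_t$ whereas all vertices of $P_v$ belong to $A_t$). If no such $v$ exists, terminate with $A:=A_t$. Because $D$ is out-proper, each label from $X_0\cup Y_0$ appears on at most one out-edge of $v$, so at most $|P_v|\leq t\leq \log n$ of the out-edges of $v$ can carry a forbidden label. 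Hence every non-terminating step contributes at least $n/\log n$ new vertices to $A$, and since $|A|\leq |V(D)|=n$ the process must terminate within at most $\log n$ steps. At that point each $v\in A$ is reached by a rainbow path of length $\leq \log n$, i.e.\ $\dRainbow{c^*,v}\leq \log n$, and the stopping condition yields $|N^+(v)\setminus A|\leq n/\log n+\log n$ for every $v\in A$; for $n$ large enough that $\log n \leq n/\log n$, this gives the desired bound (up to a small adjustment of the threshold and an error that is absorbed by the $20n/\log n$ slack in the theorem).

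The main obstacle is balancing two competing pressures: the rainbow-extension step may fail when the label of $vu$ already occurs on $P_v$, yet we need the net per-step growth to stay at least $n/\log n$ so that the iteration terminates in $\log n$ steps and keeps the rainbow distance small. Out-properness of $D$ is exactly what caps the number of blocked out-edges by $|P_v|$, and the fact that both $|P_v|$ and the total number of steps are $O(\log n)$---while the expansion threshold is the much larger $n/\log n$---is what keeps the argument in balance. One could alternatively try to deduce the claim from Lemma~\ref{CloseSubgraphsLowExpansion} applied to $D$ with $\epsilon = 1/\log n$, setting $A = N^{t_0}(c^*)$; the snag there is showing $N^+(A)\setminus A \subseteq N^{t_0+1}(c^*)\setminus A$, which requires precisely the same rainbow-extension analysis, so the greedy construction above is essentially the cleanest route.
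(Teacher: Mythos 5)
Your proof is correct, up to the constant slack you yourself flag, and it amounts to the same underlying idea as the paper's. The paper's own proof is a one-liner: apply Lemma~\ref{CloseSubgraphsLowExpansion} to $D$ with $\epsilon=(\log n)^{-1}$ and take $A=N^{t_0}(c^*)$. That lemma only gives $|N^{t_0+1}(c^*)\setminus N^{t_0}(c^*)|\leq n/\log n$, and to convert this into the out-degree bound $|N^+(v)\setminus A|\leq n/\log n$ one still needs the rainbow-extension observation you make: if $v\in A$ has a length-$\leq t_0$ rainbow path $P$ from $c^*$, then by out-properness at most $|P|\leq\log n$ of the out-edges $vu$ carry a label already on $P$, and every other $u\in N^+(v)\setminus A$ satisfies $u\in N^{t_0+1}(c^*)$, so $|N^+(v)\setminus A|\leq n/\log n+\log n$. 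The paper leaves this step implicit (it is the same calculation that appears spelled out in the proof of Lemma~\ref{CloseHighDegreeSubgraph}), whereas you carry it out explicitly inside a bespoke greedy construction that simultaneously reproves the "bounded-expansion radius" content of Lemma~\ref{CloseSubgraphsLowExpansion}. Both routes give $|N^+(v)\setminus A|\leq n/\log n+\log n$ rather than $n/\log n$; as you note, $\log n\leq n/\log n$ here, and the $20n/\log n$ slack in Theorem~\ref{GoldenRatioTheorem} (used later only as $19n/\log n$) absorbs this easily, so the discrepancy is harmless. In short: same mechanism, but you inline the lemma rather than citing it, and you are more careful about the rainbow-extension step than the paper's terse proof is.
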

\begin{proof}
This follows by applying Lemma~\ref{CloseSubgraphsLowExpansion} to $D$ with $\epsilon=(\log n)^{-1}$.
\end{proof}

Let $A$ be the set of colours given by the above claim. Let $M'$ be the submatching of $M$ consisting of the edges with colours not in $A$. Since $c^* \in A$, we have $|M'|+|A|=n$.

Let $A_X$ be the subset of $X$ spanned by edges of $M$ with colours from $A$, and $A_Y$ be the subset of $Y$ spanned by edges of $M$ with colours from $A$.
 Claim~\ref{GoldRatioFewXYedges} shows that for any $a\in A$ there are at most $\log n$ colour $a$ edges between $X_0$ and $Y_0$.  Therefore there are at least $(\phi-1) n+20n/\log n-\log n$ colour $a$ edges from $X_0$ to $Y\cap M$. Using the property of $A$ from Claim~\ref{CloseSubgraphGoldenRatio} we obtain that there are at least $(\phi-1) n+19n/\log n-\log n$ colour $a$ edges from $X_0$ to $A_Y$. Similarly, for any $a\in A$ we obtain at least $(\phi-1) n+19n/\log n-\log n$ colour $a$ edges from $Y_0$ to $A_X$.

By applying Theorem~\ref{Woolbright} to the subgraph of $G$ consisting of the colour $A$ edges between $X_0$ and $A_Y$ we can find a subset $A_0\subseteq A$  and a rainbow matching $M_0$ between $X_0$ and $A_Y$ using exactly the colours in $A_0$ such that we have
\begin{align*}
|A_0|&\geq  (\phi-1)n+19n/\log n-\log n -\sqrt{(\phi-1)n+19n/\log n-\log n}\\
&\geq (\phi-1)n-6\sqrt{n}
\end{align*}

Let $A_1=A\setminus A_0$. We have $|A_1|\leq n-|A_0|\leq (2-\phi)n+ 6\sqrt{n}$. 
Recall that for each $a\in A_1$ there is a colour  $a$ matching between $Y_0$ and $A_X$ of size at least $(\phi-1) n+19n/\log n-\log n$. 
Notice that the following holds
\begin{align*}
(\phi-1) n+\frac{19n}{\log n} -\log n&\geq  \phi ((2-\phi)n+ 6\sqrt{n})+\frac{20((2-\phi)n+ 6\sqrt{n})}{ \log((2-\phi)n+ 6\sqrt{n})}\\
&\geq \phi|A_1|+\frac{20|A_1|}{\log |A_1|}.
\end{align*}
The first inequality follows from $\phi^2-\phi-1=0$ as well as some simple bounds on $\sqrt n$ and $\log n$ for $n\geq 10^6$. The second inequality holds since $x/\log x$ is increasing.

By induction there is a rainbow matching $M_1$ between $Y_0$ and $A_X$ using exactly the colours in $A_1$.
Now $M'\cup M_0\cup M_1$ is a rainbow matching in $G$ of size $n$.
\end{proof}
\section{Concluding remarks}\label{SectionConclusion}
Here we make some concluding remarks about the techniques used in this paper.
\subsection*{Analogues of Menger's Theorem for rainbow $k$-connectedness}
 One would like to have a version of Menger's Theorem for  rainbow $k$-edge-connected graphs as defined in the introduction. In this section we explain why the most natural analogue fails to hold.
 
Consider the following two properties in an edge-coloured directed graph $D$ and a pair of vertices $u,v\in D$.
\begin{enumerate}[(i)]
\item For any set of $k-1$ colours $S$, there is a rainbow $u$ to $v$ path $P$ avoiding colours in $S$.
\item There are $k$ edge-disjoint $u$ to $v$ paths $P_1, \dots, P_k$ such that $P_1\cup \dots\cup P_k$ is rainbow.
\end{enumerate}
The most natural analogue of Menger's Theorem for  rainbow $k$-connected graphs would say that for any graph we have (i) $\iff$ (ii). One reason this would be a natural analogue of Menger's Theorem is that there is fractional analogue of the statement (i) $\iff$ (ii). We say that a rainbow path $P$ contains a colour $c$ if $P$ has a colour $c$ edge.
\begin{proposition}\label{FractionalMenger}
Let $D$ be a edge-coloured directed graph, $u$ and $v$ two vertices in $D$, and $k$ a real number. The following are equivalent. 
\begin{enumerate}[(a)]
\item For any assignment of non-negative real number $y_c$ to every colour $c$, with $\sum_{c \text{ a colour}} y_c< k$, there is a rainbow $u$ to $v$ path $P$ with  $\sum_{c \text{ contained in  } P} y_c< 1$.
\item We can assign a non-negative real number $x_P$ to every rainbow $u$ to $v$ path $P$, such that for any colour $c$ we have $\sum_{P \text{ contains  } c} x_P\leq 1$ and also $\sum_{P \text{ a rainbow } u \text{ to }  v \text{ path}} x_P\geq k$.
\end{enumerate}
\end{proposition}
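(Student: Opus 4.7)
The plan is to recognize both (a) and (b) as statements about the optima of a dual pair of linear programs and then invoke LP strong duality. Since $D$ is finite, the set of rainbow $u$-$v$ paths and the set of colours are finite, so we are working with a finite-dimensional LP.

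First I would introduce the primal LP
\[
\text{maximize } \sum_{P} x_P \quad \text{subject to} \quad \sum_{P \ni c} x_P \le 1 \text{ for every colour } c, \quad x_P \ge 0,
\]
where $P$ ranges over rainbow $u$-$v$ paths and ``$P \ni c$'' means $P$ contains a colour-$c$ edge. The dual is
\[
\text{minimize } \sum_{c} y_c \quad \text{subject to} \quad \sum_{c \in P} y_c \ge 1 \text{ for every rainbow } u\text{-}v \text{ path } P, \quad y_c \ge 0.
\]
The primal is feasible (take $x \equiv 0$) and the dual is feasible (take $y_c \equiv 1$, which works because any rainbow $u$-$v$ path has at least one edge whenever there is such a path; the case of no rainbow path is trivial, forcing $k \le 0$ on both sides). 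By LP strong duality, the two optima coincide at a common value $k^\star$, and both are attained.

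Next I would observe that (b) is literally the assertion ``the primal optimum is at least $k$''. For (a), the contrapositive reads: every $y \ge 0$ with $\sum_{c \in P} y_c \ge 1$ for all rainbow paths $P$ satisfies $\sum_c y_c \ge k$; that is, every dual-feasible $y$ has objective $\ge k$, which is precisely ``the dual optimum is at least $k$''. Hence both (a) and (b) are equivalent to $k^\star \ge k$, and strong duality yields (a) $\iff$ (b).

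The one thing to be careful about is the handling of the strict inequalities in (a), but this is not a real obstacle. If $k^\star \ge k$ and $\sum_c y_c < k$, then $y$ cannot be dual-feasible, so some rainbow path $P$ has $\sum_{c \in P} y_c < 1$ strictly, giving (b) $\Rightarrow$ (a). Conversely, if $k^\star < k$, the dual optimum is attained by some feasible $y^\star$ with $\sum_c y^\star_c = k^\star < k$ and $\sum_{c \in P} y^\star_c \ge 1$ for every rainbow path, directly contradicting (a); so (a) $\Rightarrow$ $k^\star \ge k$ $\Rightarrow$ (b). The whole proof therefore reduces to a clean invocation of strong LP duality on the covering/packing LP pair above.
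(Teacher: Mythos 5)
Your proof is correct and takes essentially the same route as the paper: both set up the same primal packing LP over rainbow $u$-$v$ paths and its dual covering LP over colours, and invoke strong LP duality to identify the two optima. Your write-up is a bit more careful than the paper's about attainability and the bookkeeping of the strict inequalities in (a), but the underlying argument is identical.
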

\begin{proof}
Let $k_a$ be the minimum of $\sum_{c \text{ a colour}} y_c$ over all choices of non-negative real numbers $y_c$ satisfying $\sum_{c \text{ contained in  } P} y_c\geq 1$ for all $u$ to $v$ paths $P$.
Similarly, we let $k_b$ be the maximum of $\sum_{P \text{ a rainbow } u \text{ to }  v \text{ path}} x_P$ over all choices of non-negative real numbers $x_P$ satisfying $\sum_{P \text{ contains  } c} x_P\leq 1$ for all colours $c$. 

It is easy to see that $k_a$ and $k_b$ are solutions of two linear programs which are dual to each other. Therefore, by the strong duality theorem (see~\cite{LinearProgrammingBook}) we have that $k_a=k_b$ which implies the proposition.
\end{proof}
The reason we say that Proposition~\ref{FractionalMenger} is an analogue of the statement ``(i) $\iff$ (ii)'' is that if the real numbers $y_c$ and $x_P$ were all in $\{0,1\}$ then (a) would be equivalent to (i) and (b) would be equivalent to (ii) (this is seen by letting $S=\{c: y_c=1\}$  and $\{P_1, \dots, P_k\}=\{P: x_P=1\}$).

Unfortunately (i) does not imply (ii) in a very strong sense. In fact even if (ii) was replaced by the weaker statement ``there are $k$ edge-disjoint rainbow $u$ to $v$ paths'', then (i) would still not imply (ii).
\begin{proposition}
For any $k$ there is a coloured directed graph $D_k$ with two vertices $u$ and $v$ such that the following hold.
\begin{enumerate}[(I)]
\item For any set of $k$ colours $S$, there is a rainbow $u$ to $v$ path $P$ avoiding colours in $S$.
\item Any pair $P_1$, $P_2$ of rainbow $u$ to $v$ paths have a common edge.
\end{enumerate}
\end{proposition}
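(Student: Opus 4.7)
My plan is to construct $D_k$ explicitly as a ``bottleneck sequence with parallel bypasses''. Take a directed path $u = w_0 \to w_1 \to \cdots \to w_{2k+1} = v$, where the bottleneck edge $e_i := w_{i-1} w_i$ has its own fresh colour $\gamma_i$. Introduce $k$ colour pools $\{\delta_p^1, \delta_p^2\}_{p=1}^{k}$, all distinct from one another and from the $\gamma_i$'s. For every $i \in \{1, \dots, 2k+1\}$ and every $p \in \{1, \dots, k\}$, add a bypass vertex $b_{i,p}$ together with edges $w_{i-1} \to b_{i,p}$ coloured $\delta_p^1$ and $b_{i,p} \to w_i$ coloured $\delta_p^2$.

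Every $u$-to-$v$ path in $D_k$ traverses $w_0, w_1, \ldots, w_{2k+1}$ in order, choosing at each step either the bottleneck $e_i$ or a bypass via some $b_{i,p}$. The key observation is that a rainbow path cannot reuse a pool: if two bypasses use the same pool $p$, then the colour $\delta_p^1$ would appear twice. Since only $k$ pools exist, any rainbow path uses at most $k$ bypasses and hence at least $k+1$ bottleneck edges. Thus the rainbow $u$-to-$v$ paths are parametrised by a bottleneck set $F \subseteq \{1, \ldots, 2k+1\}$ with $|F| \geq k+1$, together with an injection assigning distinct pools to the skipped positions $\{1, \dots, 2k+1\} \setminus F$.

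Condition (II) follows immediately by pigeonhole: any two rainbow paths have bottleneck sets $F_1, F_2$ of size at least $k+1$ in a $(2k+1)$-element ground set, so $|F_1 \cap F_2| \geq (k+1) + (k+1) - (2k+1) = 1$, and the two paths share a bottleneck edge $e_i$ with $i \in F_1 \cap F_2$. For condition (I), given a $k$-set $S$ of colours, let $s_1$ be the number of $\gamma$-colours in $S$ and let $r \leq k - s_1$ be the number of pools with at least one colour in $S$. There are $2k + 1 - s_1$ ``available'' bottleneck indices (those with $\gamma_i \notin S$) and $k - r$ available pools. Since $s_1 + r \leq s_1 + (k - s_1) = k$, we have $k+1+r \leq 2k+1-s_1$, so I may pick $F$ to be a $(k+1+r)$-subset of available indices and route the $k - r$ skipped positions injectively through the $k - r$ available pools; the resulting path uses only colours outside $S$ and is rainbow.

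The main concern to address is ruling out unintended rainbow paths that could violate (II), but this is essentially automatic because the underlying graph is purely a chain of parallel bottleneck/bypass choices: every $u$-to-$v$ path factors as the sequence of choices described above, so the parametrisation of rainbow paths is complete and the counting arguments for (I) and (II) are exact.
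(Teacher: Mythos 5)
Your construction is correct and rests on the same pigeonhole idea as the paper's: force every $u$-to-$v$ path to traverse a fixed chain of $2k+1$ positions, arrange that a rainbow path can bypass at most $k$ of them, and then any two rainbow paths must agree on at least one bottleneck edge. The paper realises this with a multigraph of $k+1$ parallel copies of a length-$(>2k+1)$ path (one rainbow, $k$ monochromatic) and remarks that it is ``easy to modify to a simple graph''; your bypass-vertex gadget with two-colour pools is exactly that simple-graph realisation, and your verification of condition (I)---counting available bottleneck positions versus available pools via $s_1 + r \le k$---is more explicit than the paper, which does not spell (I) out at all.
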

\begin{proof}
We will construct a multigraph having the above property.  It is easy to modify the construction to obtain a simple graph. Fix $m>2k+1$.
The vertex set of $D$ is $\{x_0, \dots, x_m\}$ with $u=x_0$ and $v=x_m$. For each $i=0, \dots, m-1$, $D$ has $k+1$ copies of the edge $x_ix_{i+1}$ appearing with colours $i$, $m+1$, $m+2$, $\dots$, $m+k$. In other words $G$ is the union of $k+1$ copies of the path $x_0 x_1 \dots x_m$ one of which is rainbow, and the rest monochromatic.

Notice that $D$ satisfies (II). Indeed if $P_1$ and $P_2$ are $u$ to $v$ paths, then they must have vertex sequence $x_0 x_1 \dots x_m$. Since there are only $m+k$ colours in $D$ both $P_1$ and $P_2$ must have at least $m-k$ edges with colours from $\{0, \dots, m-1\}$. By the Pigeonhole Principle, since $2(m-k)>m$, there is some colour $i\in \{0, \dots, m\}$ such that both $P_1$ and $P_2$ have a colour $i$ edge. But the only colour $i$ edge in $D$ is $x_ix_{i+1}$ which must therefore be present in both $P_1$ and $P_2$.
\end{proof}

There is another, more subtle, reason why (i) does not imply (ii). Indeed if we had ``(i) $\implies$ (ii)'' then this would imply that every bipartite graph consisting of $n$ matchings of size $n$ contains a rainbow matching of size $n$. 

Indeed given a bipartite graph $G$ with bipartition $X \cup Y$ consisting of $n$ matchings of size $n$ construct an auxiliary graph $G'$ by adding two vertices $u$ and $v$ to $G$ with all edges from $u$ to $X$ and from $Y$ to $v$ present. These new edges all receive different colours which were not in $G$. It is easy to see for any set $S$ of $n-1$ colours, there is a rainbow $u$ to $v$ path in $G'$ i.e. (i) holds for this graph with $k=n$. In addition, for a set of paths $P_1, \dots, P_t$ with $P_1\cup \dots\cup P_t$ rainbow, it is easy to see that $\{P_1\cap E(G), \dots, P_t\cap E(G)\}$ is a rainbow matching in $G$ of size $t$. 

Therefore if ``(i) $\implies$ (ii)'' was true then we would have a rainbow matching in $G$ of size $n$. However, as noted in the introduction, there exist Latin squares without transversals, and hence bipartite graphs consisting of $n$ matchings of size $n$ containing no rainbow matching of size $n$.

The above discussion has hopefully convinced the reader that the natural analogue of Menger's Theorem for  rainbow $k$-connectedness is not true. Nevertheless, it would be interesting to see if any statements about connectedness carry over to  rainbow $k$-connected graphs.

\subsection*{Improving the bound in Theorem~\ref{MainTheorem}}
One natural open problem is to improve the dependency of $N_0$ on $\epsilon$ in Theorem~\ref{MainTheorem}. Throughout our proof we made no real attempt to do this. However there is one interesting modification which one can make in order to significantly improve the bound on $N_0$ which we mention here.

Notice that the directed graphs $D_{X'}$ in Section~\ref{MainTheorem} and the directed graph $D$ in Section~\ref{GoldenRatioTheorem} had one big difference in their definition---to define the graphs $D_{X'}$ we only considered edges starting in $X$, whereas to define the graph $D$, we considered edges starting from both $X_0$ and $Y_0$. It is possible to modify the proof of Theorem~\ref{MainTheorem} in order to deal with directed graphs closer to those we used in the proof of Theorem~\ref{GoldenRatioTheorem}. There are many nontrivial modifications which need to be made for this to work. However, the end result seems to be that the analogue of Lemma~\ref{IncrementLemma} only needs to be iterated $O(\log \epsilon_0^{-1})$ many times (rather than $O(\epsilon_0^{-1})$ as in the proof of Theorem~\ref{MainTheorem}). This would lead to an improved bound on $N$ in Theorem~\ref{MainTheorem} $N=O\left(\epsilon^{C\log\epsilon}\right)$ for some constant $C$. In the grand scheme of things, this is still a very small improvement to bound in Theorem~\ref{MainTheorem}, and so we do not include any further details here. It is likely that completely new ideas would be needed for a major improvement in the bound in Theorem~\ref{MainTheorem}.

\subsection*{Acknowledgement}
The author would like to thank J\'anos Bar\'at for introducing him to this problem as well as Ron Aharoni, Dennis Clemens, Julia Ehrenm\"uller, and Tibor Szab\'o for various discussions related to it. This research was supported by the Methods for Discrete Structures, Berlin graduate school (GRK 1408).

\bibliography{Rainbow}
\bibliographystyle{abbrv}
\end{document}